\DeclareMathOperator*{\argmin}{arg\,min}
\newcommand{\commentout}[1]{\unskip}
\newcommand{\ignore}[1]{}
\newtheorem{theorem}{Theorem}[section]
\title{Two-Stage Block Orthogonalization to\\ Improve Performance of $s$-step GMRES}
\author{
\IEEEauthorblockN{
Ichitaro Yamazaki\IEEEauthorrefmark{1},
Andrew J. Higgins\IEEEauthorrefmark{2},
Erik G. Boman\IEEEauthorrefmark{1},
Daniel B. Szyld\IEEEauthorrefmark{2}
}

\IEEEauthorblockA{
\IEEEauthorrefmark{1}Sandia National Laboratories, Albuquerque, New Mexico, U.S.A\\
\IEEEauthorrefmark{2}Temple University, Philadelphia, Pennsylvania, U.S.A 
}
}
\begin{document}

\maketitle

\begin{abstract}

Generalized Minimum Residual (GMRES) is a popular Krylov
subspace projection method
for solving a large nonsymmetric linear system of equations. It computes the approximate
solution with the minimum residual norm in the generated Krylov projection subspace.
On current computer architectures,
the solver performance can be limited by its communication cost to generate
the orthonormal basis vectors of the projection subspace.
To address this potential performance bottleneck,
its $s$-step variant orthogonalizes a block of $s$ basis vectors at a time,
providing the potential to reduce the communication cost by a factor of $s$.
Unfortunately, for a large step size $s$, the solver can generate extremely ill-conditioned basis vectors, and
to maintain the stability in practice, a conservatively small step size is used,
which limits the performance advantages that the $s$-step solver can provide.
To enhance the solver performance using a small step size,
in this paper, we introduce a two-stage block orthogonalization scheme.
Similar to the original scheme,
the first stage of the proposed method operates on the block of $s$ basis vectors at a time, but
its objective is to maintain the well-conditioning of the generated basis vectors
with a lower cost.
The orthogonalization of the basis vectors is delayed until the second stage
when enough basis vectors are generated to obtain higher performance.
Our analysis shows the stability of the proposed two-stage scheme.
At the same time,
the performance is improved because while still requiring the same amount of computation as the original scheme,
the proposed scheme performs the majority of the communication at the second stage, reducing the overall communication requirements.
Our performance results with up to 192 NVIDIA V100 GPUs on the Summit supercomputer demonstrate that when solving a 2D Laplace problem,
the two-stage approach can reduce the orthogonalization time and the total time-to-solution by the respective factors of up to $2.6\times$ and $1.6\times$
over the original $s$-step GMRES, which had already obtained the respective speedups of $2.1\times$ and $1.8\times$ over the standard GMRES.
Similar speedups were obtained for 3D problems and for matrices from the SuiteSparse Matrix Collection.
%
\end{abstract}

\section{Introduction}

Generalized Minimum Residual (GMRES)~\cite{Saad:1986} is a popular Krylov subspace projection method
for iteratively solving a large nonsymmetric linear system of equations.
At each iteration, it generates a new Krylov basis vector for the projection subspace 
using a sparse-matrix vector multiply (SpMV), typically combined with a preconditioner
to accelerate its solution convergence rate.
The basis vector is then orthonormalized   
to maintain the numerical stability 
of generating the projection subspace in finite precision
and to compute the approximate solution that minimizes the $\ell_2$ residual norm in the projection subspace.
As the subspace dimension grows,
it becomes expensive to generate the orthonormal basis vectors
in terms of both computation and storage.
To reduce the costs of computing a large subspace,
the iteration is restarted after a fixed number $m+1$ of basis vectors are computed.
%

To orthogonalize the new basis vector at each iteration, GMRES uses BLAS-1 and BLAS-2 operations,
which have limited potential for data reuse, and requires the global reduces among all the MPI processes.
On current computers, these communication 
(e.g., the cost of moving data through the local memory hierarchy and between the MPI processes)
can take much longer than the required computation time
and can limit the performance of the orthogonalization process.
As a result, when efficient and scalable SpMV and preconditioners are available, 
orthogonalization could become a significant part
of the iteration time and a performance bottleneck.

To reduce this potential performance bottleneck,
communication-avoiding (CA) variants of GMRES~\cite{Carson:2015,Hoemmen:2010}, based on $s$-step methods~\cite{Sturler:1995,Joubert:1992}, 
were proposed.
To generate the orthogonal basis vectors of the Krylov projection subspace,
the $s$-step GMRES utilizes two computational kernels:
1) the Matrix Powers Kernel (MPK) to generate the $s+1$ Krylov vectors by applying SpMV and preconditioner $s$ times, followed by
2) the Block Orthogonalization Kernel that orthogonalizes a block of $s+1$ basis vectors at once.
Since the block orthogonalization kernel performs most of its local computation using BLAS-3
and synchronizes only every $s$ steps, compared to the standard GMRES,
the $s$-step variant has the potential to reduce the communication cost
of orthogonalizing the $s$ basis vectors by a factor of~$s$. 
This is a very attractive feature, especially on the currently available GPU clusters, 
where the communication can be significantly more expensive compared to computation.

Unfortunately, though mathematically equivalent, for a larger step size,
MPK can generate extremely ill-conditioned $s$-step basis vectors.
Hence, in practice, in order to maintain the stability of MPK, 
a conservatively small step size is used,
which limits the performance advantages of $s$-step GMRES over the standard GMRES.
In this paper, we introduce a two-stage orthogonalization scheme
to improve the performance of the $s$-step GMRES, while still 
using a small step size $s$ to maintain the stability of MPK.

There are two main contributions in this paper. First, we analyze
the current state-of-the-art block orthogonalization algorithms
for $s$-step GMRES (Section~\ref{sec:bcgs2}). This motivates a new combination of the block orthogonalization algorithms, which we call BCGS-PIP2.
Though this new variant improves the performance of the original algorithms, it still has two synchronizations every $s$ steps.
Second, to further enhance the performance,
we propose and extend the study to the two-stage approach, which delays one of the synchronizations until enough number of basis vectors, $\widehat{s}$, are generated to obtain higher performance (Section~\ref{sec:two-stage}). 
In other words, though the two-stage approach performs about the same amount of computation as the original algorithms,
it performs half of the local computation using the larger block size $\widehat{s}$ instead of the original step size $s$, hence increasing the potential for the data reuse.
In addition, the two-stage approach performs only one synchronization at the first stage (every $s$ steps),
while delaying the other synchronization until the second stage (every $\widehat{s}$ steps).
In particular, if we set the second step size same as the Krylov subspace projection dimension (i.e., $\widehat{s}=m$), the two-stage approach provides the potential to reduce the communication cost
by a factor of two.

We demonstrate the potential of the new variant and of the two-stage approach
through numerical and performance experiments (Sections~\ref{sec:numerics} and \ref{sec:performance}):
\begin{itemize}
    \item We study the numerical stability of the new variant and that of the two-stage approach. We clarify the conditions that each of the algorithms requires to maintain its stability, and present numerical experiments to demonstrate the
          numerical properties of the algorithms.

    \item We implement the two-stage approach in Trilinos~\cite{trilinos-website}, which is a collection of
          open-source software packages for developing
          large-scale scientific and engineering simulation codes. Trilinos software stack
          allows the solvers, like $s$-step GMRES, to be portable to different computer architectures,
          using a single code base.
          
    \item We present GPU performance of $s$-step GMRES, combined with the two-stage approach.
          Our performance results on the Summit supercomputer demonstrate that when solving a 2D Laplace problem on 192 NVIDIA V100 GPUs,
          our two-stage approach can obtain speedups of $2.6\times$ and $1.6\times$ 
          for orthogonalization and for the total time-to-solution, respectively, over the original $s$-step GMRES,
          which had already obtained the respective speedups of $2.1\times$ and $1.8\times$ over the standard GMRES.
          Similar speedups were observed for 3D model problems and for matrices from the SuiteSparse Matrix Collection.
\end{itemize}
The two-stage approach also alleviates the need of fine-tuning the step size for each problem on a specific hardware
since a conservatively small step-size may be used for numerical stability
while relying on the two-stage approach to obtain the performance improvement.

Table~\ref{tab:notation} lists the notation used in this paper. In addition, we use $Q_{\ell:t}$ to denote the blocks column vectors of $Q$ with the block column indexes $\ell$ to $t$, while $q_{k:s}$ is the set of vectors with the column indexes $k$ to $s$. Finally,
$[Q,V]$ is the column concatenation of $Q$ and $V$.

\begin{table}
\centerline{\footnotesize
\begin{tabular}{c|l}
notation    & description\\
\hline
 $n$        & problem size\\
 $m$        & subspace dimension \\
 $s$        & step size (for the first stage)\\
 $\widehat{s}$   & second step size (for the second stage and $s \le \widehat{s} \le m$)\\
 $v_k^{(j)}$ & $k$th basis vector within $s$ basis vectors\\
$V_j$       & $j$th $s$-step basis vectors including the starting vector, i.e.,\\
            & a set of $s+1$ vectors generated by MPK \\
            & $V_j = [v_{s(j-1)+1},v_{s(j-1)+2},\dots,v_{sj+1}]$\\
            & and $V_0 = [v_0]$ to simplify the notation\\
$\underline{V}_j$
            & same as $V_j$ except excluding the last vector, \\
            & which is the first vector of $V_{j+1}$, i.e.,\\
            & a set of $s$ vectors $\underline{V}_j = [v_{s(j-1)+1},v_{s(j-1)+2},\dots,v_{sj}]$\\
$\widehat{V}_j$ & $V_j$ after the first inter-block orthogonalization\\
$\widehat{Q}_j$ & $V_j$ after the pre-processing stage\\
$Q_j$           & orthogonal basis vectors of $V_j$\\
$\epsilon$  & machine epsilon\\
$\kappa(V_j)$ & condition number of $V_j$
\end{tabular}}
\caption{Notation used in the paper.} \label{tab:notation}
\end{table}

\section{Related Work}


The block orthogonalization is a critical component in many applications including linear or eigen solvers, and is an active research area. 
There are several combinations of the block orthogonalization schemes~\cite{Carson:2022}, but, especially in terms of the performance on current computer architectures, the Block Classical Gram-Schmidt (BCGS), combined with some variants of Cholesky QR (CholQR)~\cite{Stath:2002}, is considered the state-of-the-art. This paper builds and extends on this combination. Some techniques that are relevant to this paper include:
\begin{itemize}
\item
CholQR computes the QR factorization of a tall and skinny matrix.
Unfortunately, CholQR can fail when the condition number of the input matrix is greater than
the reciprocal of the square-root of the machine precision
(it computes the Cholesky factorization of the Gram matrix of the input basis vectors to be orthogonalized,
and the Gram matrix has the condition number which is the square of the input vectors' condition number).
Nonetheless, it performs well on current computer architectures because most of its local computation is based on BLAS-3 and it requires just one global reduce.
Hence, it is still used in practice but requires some remedies to maintain its stability.
In addition, to maintain the orthogonality of the column vectors,
it is often applied with reorthogonalization (referred to as CholQR twice, or equivalently CholQR2 for short).

\item Shifted Cholesky QR~\cite{Fukaya:2020} is introduced to avoid this numerical instability of CholQR.
Though it may require one additional round of the orthogonalization, increasing
     the computational and communication costs of CholQR2 by a factor of $1.5\times$,
      it has the stability guarantee as long as the input vectors are numerically full-rank.

\item
A mixed-precision variant of CholQR~\cite{Yamazaki:2014:mpChol}, which has similar stability properties as the shifted CholQR, was proposed. To ensure stability, the Gram matrix is accumulated in double the working precision. When working in double precision, it requires quadruple precision, which can be software-emulated through double-double precision arithmetic if quadruple precision is not supported by the hardware~\cite{Hida:2001}. Though double-double arithmetic has high computational overhead compared to double precision, the mixed-precision CholQR does not increase the communication cost significantly.
When the performance of CholQR is dominated by communication, it may obtain performance similar to the standard CholQR. Its application to the block orthogonalization has also been studied~\cite{Yamazaki:2015}.

\item
There are low-synchronous variants of block orthogonalization algorithms that reduce the number of synchronizations
and improve the performance of orthogonalization~\cite{Carson:2022,Yamazaki:2020}. 
These techniques require an efficient low-synchronous intra-block orthogonalization algorithm.
Though there is a CA tall and skinny QR factorization algorithm that is unconditionally stable~\cite{Demmel:2012},
its local computation is based on Householder QR (HHQR) factorization,
which is mainly based on BLAS-1 or BLAS-2 and may obtain much lower performance than BLAS-3 based CholQR.
Hence, in practice,
these low-synchronous techniques rely on some variant of CholQR factorization for orthogonalizing each block. 
\end{itemize}
Though some of the techniques mentioned above have improved stability, the $s$-step basis vectors, generated by MPK, can be extremely ill-conditioned for a large step size $s$, and in order to ensure the stability in practice, $s$-step GMRES still needs to use a small step-size. Since the performance of the orthogonalization may be limited by the multiple synchronizations required at every $s$ steps, in this paper, we look at avoiding or delaying some of the synchronizations, while using a small step size $s$ to maintain stability. Moreover, the proposed two-stage approach may be combined with these previous approaches. In particular,
random-sketching techniques have been recently integrated into CholQR~\cite{Balabanov:2022}. We are investigating the potential of randomized CholQR to improve the stability of our block orthogonalization process.

\section{$s$-step GMRES}
\label{sec:sstep}

\begin{figure}[t]
\begin{center}
  \centerline{\fbox{\begin{minipage}[h!]{.45\textwidth}
    \footnotesize
    \input{codes/gmres}
  \end{minipage}}}
\end{center}
  \caption{Pseudocode of $s$-step GMRES where $[Q_j,R_j] = \mbox{BlkOrth}(Q,V_j)$ returns the QR factorization such that $Q R = V$ 
           with $Q^TQ = I$ and $R$ is upper triangular with non-negative diagonals. }\label{algo:sstep}
\end{figure}

Fig.~\ref{algo:sstep} shows the pseudocode of $s$-step GMRES
for solving a linear system $Ax=b$,
which has been also implemented in the Trilinos software framework. 

Compared to the standard GMRES, this $s$-step variant has the potential of reducing the communication cost 
of generating the $s$ orthonormal basis vectors by a factor of $s$,
where the standard GMRES is essentially $s$-step GMRES with the step size of one.
For instance, to apply SpMV $s$ times (Lines 7 to 9 of the pseudocode),
several CA variants of the ``matrix-powers kernel'' (MPK) have been proposed~\cite{Mohiyuddin:2009}.
However, to reduce the communication latency on a distributed-memory computer,
CA MPK requires additional memory and local computation, and may also increase the total communication volume.
More critically, in practice, SpMV is typically applied together with a preconditioner to accelerate the convergence rate of GMRES.
Although a few CA preconditioners of specific types have been proposed \cite{Grigori:2015,Yamazaki:2014}, avoiding communication 
for other types of preconditioners is still an open research problem. 
To support a wide range of application needs, instead of CA MPK,
Trilinos $s$-step GMRES uses a standard MPK (applying each SpMV with neighborhood communication and preconditioner in sequence),
and focuses
on improving the performance of block orthogonalization by reducing its communication costs.
Also, avoiding the global communication in orthogonalization could lead to a greater performance gain than CA MPK does, when scalable implementations of SpMV and preconditioner are available. 
This motivates our study of the block orthogonalization in this paper.

\section{Block Orthogonalization}
\label{sec:bcgs2}

\begin{figure}[t]
\begin{center}
  \begin{subfigure}[b]{\linewidth}
  \centerline{\fbox{\begin{minipage}[h!]{.9\linewidth}
    \footnotesize
    \input{codes/BCGS}
  \end{minipage}}}
  \caption{Block Classical Gram-Schmidt (BCGS) for inter-block Orthogonalization.}
  \end{subfigure}
  \begin{subfigure}[b]{\linewidth}
  \centerline{\fbox{\begin{minipage}[h!]{.9\linewidth}
    \footnotesize
    \input{codes/BCGS2-v2}
  \end{minipage}}}
  \caption{Block Classical Gram-Schmidt twice (BCGS2) for inter-block orthogonalization, combined with HHQR or CholQR2 intra-block orthogonalization.} \label{algo:bcgs2}
  \end{subfigure}
\end{center}
  \caption{ Block Classical Gram-Schmidt to generate a new set of orthonormal basis vectors $Q_j$. HHQR$(\widehat{V}_j)$ returns the QR factorization of $\widehat{V}_j$
           based on the Householder algorithm, while the pseudocode of CholQR2 is shown in Fig.~\ref{algo:cholqr2}. } \label{algo:bcgs}
\end{figure}

\begin{figure}
\begin{center}
  \begin{subfigure}[b]{\linewidth}
  \centerline{\fbox{\begin{minipage}[h!]{.9\linewidth}
    \footnotesize
    \input{codes/cholQR_v1}
  \end{minipage}}}
  \caption{Cholesky QR (CholQR).}
  \end{subfigure}
  \begin{subfigure}[b]{\linewidth}
  \centerline{\fbox{\begin{minipage}[h!]{.9\linewidth}
    \footnotesize
    \input{codes/cholQR2}
  \end{minipage}}}
  \caption{Cholesky QR twice (CholQR2).} \label{algo:cholqr2}
  \end{subfigure}
\end{center}
  \caption{
           Intra-block Cholesky QR to orthonormalize a set of vectors $\widehat{V} \in \mathbb{R}^{n\times s+1}$, where $\mbox{Chol}(G)$ returns the upper-triangular Cholesky factor of the Gram matrix $G$.} \label{algo:cholqr}
\end{figure}

The block orthogonalization algorithm in $s$-step GMRES consists of two algorithms:
the \emph{inter} and \emph{intra} block orthogonalization to orthogonalize the new block of $s+1$ basis vectors against the already-orthogonalized previous blocks of vectors and 
among the vectors within the new block, respectively.
There are several combinations of the inter- and intra-block orthogonalization algorithms~\cite{Carson:2022},
but the state-of-the-art inter-block algorithm is based on the Block Classical Gram-Schmidt (BCGS), 
which is entirely based on BLAS-3 operations and requires only one global reduce.
As a result, BCGS obtains superior performance on current computers. 

To maintain orthogonality, in practice, BCGS is applied
with re-orthogonalization (BCGS twice, or BCGS2). 
Fig.~\ref{algo:bcgs} shows pseudocode of BCGS2,
which has two algorithmic options for the first intra-block orthogonalization, while CholQR is used for the second intra-block orthogonalization. 
As we discuss in more detail below, with these combinations of the inter and intra block-orthogonalization algorithms, the orthogonality errors of the computed basis vectors $Q_j$ can be bounded by $\mathcal{O}(\epsilon)$, where $\epsilon$ is the machine precision.
For our discussion of BCGS2 using different algorithms such as HHQR or CholQR2 for the first intra-block orthogonalization, we refer them as ``BCGS2 with HHQR'' or ``BCGS2 with CholQR2'', respectively.

\subsection{BCGS2 with HHQR}

When the column vectors of the input matrix $V$ are numerically full-rank (i.e., $\kappa(V) \max\{n,s\}~\epsilon < 1$),
BCGS2 with HHQR in Fig.~\ref{algo:bcgs2} generates the orthonormal basis vectors~$Q$ with orthogonality error on the order of machine precision, i.e., $\|I-Q^TQ\| = \mathcal{O}(\epsilon)$~\cite{Barlow:2021,Barlow:2013}.
Unfortunately, for the small step size that we typically use (e.g., $s=5$ is the default step size in Trilinos),
the HHQR of $\widehat{V}_j$ is based on BLAS-1 or BLAS-2 and requires $\mathcal{O}(s)$ global reduces,
which often lead to the performance of HHQR and overall BCGS2, which is much lower
than the peak performance of the current computers (e.g., based on the memory bandwidth).

There have been significant advances in the theoretical understanding of $s$-step Krylov methods~\cite{Carson:2015}.
However, though the orthogonality error bound to obtain the backward stability of GMRES has been established~\cite{Greenbaum:1997},
to the authors knowledge,
there are no known theoretical bounds on the orthogonality errors, which are required to obtain the maximum attainable accuracy of $s$-step GMRES.
Hence, in this paper, we focus on the block orthogonalization schemes that can maintain the $\mathcal{O}(\epsilon)$ orthogonality error, like BCGS2 with HHQR does
(though this might not be needed to obtain the maximum accuracy of $s$-step GMRES), while improving the performance of the block orthogonalization.

\subsection{BCGS2 with CholQR2}

To generate the orthonormal basis vectors of $\widehat{V}_j$, HHQR and CholQR2 require about the same amount of computation. However,
as the pseudocode in Fig.~\ref{algo:cholqr} shows, in contrast to HHQR, 
CholQR is mainly based on BLAS-3 and requires only one synchronization.
As a result, on current computer architectures, CholQR
often obtains much higher performance than HHQR,
and BCGS2 with CholQR2 is considered to be one of the state-of-the-art block-orthogonalization algorithms in terms of performance.
Hence, in this paper,
we focus on BCGS2 with CholQR2 and discuss when it obtains the same stability as BCGS2 with HHQR.

In~\cite{Stath:2002, Fukaya:2015}, it was shown that when
the condition number of the input vectors $\widehat{V}_j$ is bounded as
\begin{equation}\label{eq:assumption-1}
  c_1(\epsilon, n,s)\kappa(\widehat{V}_j)^2 < 1/2,
\end{equation}
the orthogonality error of $\widetilde{V}_j$ computed by the first CholQR on Line~2 of Fig.~\ref{algo:cholqr2} is bounded by
\begin{equation}\label{eq:cholqr_bound}
  \|I - \widetilde{V}_j^T\widetilde{V}_j\| \le c_1(\epsilon, n,s)\kappa(\widehat{V}_j)^2,
\end{equation}
where the scalar term $c_1(\epsilon, n,s)$ is
\begin{equation}\label{eq:cholqr_constant}
    c_1(\epsilon, n,s) = 5\left( ns+s(s+1)\right)\epsilon.
\end{equation}
Condition~\eqref{eq:assumption-1} implies that the Cholesky factorization of the Gram matrix of $\widehat{V}_j$ is numerically stable,
and also that all the Krylov basis vectors generated by MPK are numerically full-rank (otherwise GMRES has converged). 

When condition \eqref{eq:assumption-1}, and hence the orthogonality error bound~\eqref{eq:cholqr_bound},
hold, we have the following theorem showing that CholQR2 is as stable as HHQR.
\begin{theorem}\label{thm:chol}
With the bound~\eqref{eq:cholqr_bound} and assumption~\eqref{eq:assumption-1}, 
the condition number of $\widetilde{V}_j$ computed by the first CholQR
(on Line 2 in Fig.~\ref{algo:cholqr2}) is bounded by
\[
  \kappa(\widetilde{V}_j) < \sqrt{3}.
\]
and hence, the orthogonality error of $\widehat{Q}_j$ computed by CholQR2 satisfies
\[
  \|I - \widehat{Q}_j^T \widehat{Q}_j\| = \mathcal{O}(\epsilon).
\]
\begin{proof}
Let $\sigma_1(G) \ge \dots \ge \sigma_s(G)$ be the singular values of the Gram matrix of $\widetilde{V}_j$, i.e., $G = \widetilde{V}_j^T\widetilde{V}_j$, and hence $\sigma_k(G) = \sigma_k(\widetilde{V}_j)^2$ for $k=1,\dots,s$. Then, with the  upper-bound~\eqref{eq:cholqr_bound} and assumption~\eqref{eq:assumption-1}, along with Weyl's inequality \cite{Weyl:1912}, we have
\[
\left\{
\begin{array}{l} 
  \sigma_1(\widetilde{V}_j)^2  \le 1 + \|\widetilde{V}_j^T\widetilde{V}_j - I\| \le 1 +  c_1(\epsilon, n,s)\kappa(\widehat{V}_j)^2
             <  3/2\\
  \sigma_s(\widetilde{V}_j)^2  \ge 1 - \|I - \widetilde{V}_j^T\widetilde{V}_j\| \ge 1 -  c_1(\epsilon, n,s)\kappa(\widehat{V}_j)^2
             >  1/2
\end{array}
\right.
\]
giving the above upper-bound on the condition number of $\widetilde{V}_j$
and the orthogonality error of $\widehat{Q}_j$.
\end{proof}
\end{theorem}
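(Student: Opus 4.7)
The plan is to establish the condition number bound on $\widetilde{V}_j$ directly from the orthogonality error bound~\eqref{eq:cholqr_bound} via the Gram matrix, and then apply the same CholQR stability bound~\eqref{eq:cholqr_bound} a second time, now to the better-conditioned $\widetilde{V}_j$, to conclude that the orthogonality error of $\widehat{Q}_j$ is $\mathcal{O}(\epsilon)$.

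First I would connect the singular values of $\widetilde{V}_j$ to those of its Gram matrix $G = \widetilde{V}_j^T \widetilde{V}_j$, using the identity $\sigma_k(G) = \sigma_k(\widetilde{V}_j)^2$. Then I would invoke Weyl's inequality on $G = I + (G - I)$: the symmetric perturbation $G - I$ has spectral norm bounded by $c_1(\epsilon,n,s)\kappa(\widehat{V}_j)^2$ by~\eqref{eq:cholqr_bound}, so every eigenvalue of $G$ lies within this distance of $1$. Combined with assumption~\eqref{eq:assumption-1}, which forces this distance to be strictly less than $1/2$, this yields $\sigma_1(\widetilde{V}_j)^2 < 3/2$ and $\sigma_s(\widetilde{V}_j)^2 > 1/2$, so $\kappa(\widetilde{V}_j) < \sqrt{3}$, giving the first claim.

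For the second claim, the key observation is that Line~4 of CholQR2 (Fig.~\ref{algo:cholqr2}) is itself a CholQR applied to the input $\widetilde{V}_j$. Since $\widetilde{V}_j$ now has condition number strictly less than $\sqrt{3}$, it \emph{a fortiori} satisfies the CholQR input condition analogous to~\eqref{eq:assumption-1} (namely $c_1(\epsilon,n,s)\kappa(\widetilde{V}_j)^2 < 3 c_1(\epsilon,n,s) \ll 1/2$ for reasonable $n,s$). Thus, bound~\eqref{eq:cholqr_bound} applies again to this second CholQR call, yielding
\[
  \|I - \widehat{Q}_j^T \widehat{Q}_j\| \le c_1(\epsilon,n,s)\,\kappa(\widetilde{V}_j)^2 < 3\,c_1(\epsilon,n,s) = \mathcal{O}(\epsilon),
\]
as desired.

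The main obstacle is conceptual rather than computational: one must be careful that~\eqref{eq:cholqr_bound} and~\eqref{eq:assumption-1} are stated for a generic input to CholQR, so that reapplying them to $\widetilde{V}_j$ (the output of the first CholQR) is legitimate. Once one observes that $\widetilde{V}_j$ enters the second CholQR as a fresh input whose condition number is now controlled by the first step, both conclusions follow by routine application of Weyl's inequality and the previously quoted bound; no additional rounding-error analysis is needed beyond what is already encapsulated in~\eqref{eq:cholqr_bound}.
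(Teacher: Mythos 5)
Your proposal is correct and follows essentially the same route as the paper: Weyl's inequality applied to the Gram matrix $G=\widetilde{V}_j^T\widetilde{V}_j$ under~\eqref{eq:cholqr_bound} and~\eqref{eq:assumption-1} gives $\kappa(\widetilde{V}_j)<\sqrt{3}$, and the $\mathcal{O}(\epsilon)$ orthogonality of $\widehat{Q}_j$ follows by reapplying the CholQR bound to the now well-conditioned $\widetilde{V}_j$. You simply make explicit the second application of~\eqref{eq:cholqr_bound}, which the paper leaves implicit.
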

\noindent
Hence, overall, when condition \eqref{eq:assumption-1} is satisfied,
BCGS2 with CholQR2 generates the basis vectors $Q$ with the orthogonality error of the order of the machine precision.

\begin{figure}[t]
\begin{center}
  \begin{subfigure}[b]{.9\linewidth}
  \centerline{\fbox{\begin{minipage}[h!]{\textwidth}
    \footnotesize
    \input{codes/BCGS-PIP}
  \end{minipage}}}
  \caption{BCGS with Pythagorean Inner Product (BCGS-PIP). } \label{algo:bcgs-pip}
  \end{subfigure} 
  \begin{subfigure}[b]{\linewidth}
  \centerline{\fbox{\begin{minipage}[h!]{.9\linewidth}
    \footnotesize
    \input{codes/BCGS2-pip}
  \end{minipage}}}
  \caption{BCGS-PIP twice (BCGS-PIP2).}\label{algo:bcgs-pip2}
  \end{subfigure} 
\end{center}
  \caption{ BCGS with Pythagorean Inner Product to generate a new set of orthonormal basis vectors $Q_j$. } \label{algo:bcgs2-pip}
\end{figure}

\subsection{BCGS-PIP2}

Recently, a ``single-reduce'' variant of BCGS with CholQR based on Pythagorean Inner Product (BCGS-PIP) was proposed~\cite{Carson:2022,Yamazaki:2020}. The pseudocode of BCGS-PIP is shown in Fig.~\ref{algo:bcgs-pip},
which orthogonalizes a new set of basis vectors $V_j$ against the previously-orthonormalized basis vectors $Q_{1:j-1}$.
Instead of explicitly computing the Gram matrix of $\widehat{V}_j$ for CholQR, BCGS-PIP computes it by updating the Gram matrix of $V_j$
based on the block generalization of the  Pythagorean theorem, allowing to orthonormalize the block vector $V_j$ with a single all-reduce. 

Moreover, it was shown~\cite[Theorem 3.4]{Carson:2022} that 
if the previous basis vectors have been orthogonalized to satisfy\footnote{%
This is a much stronger condition than that required by \cite[Theorem 3.4]{Carson:2022},
but is satisfied by the algorithm discussed in this paper.}
\begin{eqnarray}
  && \label{eq:assumption-Q}\|I - Q_{1:j-1}^T Q_{1:j-1}\| = \mathcal{O}(\epsilon),
\end{eqnarray}
and the MPK generates the next set of the block vector $V_j$ such that
\begin{eqnarray}
  \label{eq:assumption-2}
  && c_2(\epsilon)\kappa([Q_{1:j-1},V_j])^2 < 1/2,
\end{eqnarray}
then the orthogonality error of $\widehat{Q}_j$ computed by BCGS-PIP satisfies
\small\begin{equation}
 \|I - [Q_{1:j-1},\widehat{Q}_j]^T[Q_{1:j-1},\widehat{Q}_j]\| \le c_3(\epsilon)\kappa([Q_{1:j-1},V_j])^2, \label{eq:orthBoundQhatQ}
\end{equation}\normalsize
where $c_2(\epsilon)$ and~$c_3(\epsilon)$ are two functions that behave asymptotically like $\mathcal{O}(\epsilon)$,
similar to that given by~\eqref{eq:cholqr_constant}.

Here, in order to generate the orthogonal basis vectors~$Q_j$
with the orthogonality error of the order of the machine precision,
we apply BCGS-PIP twice (BCGS-PIP2).
The pseudocode of the resulting algorithm is shown in Fig.~\ref{algo:bcgs2-pip}.
If conditions~\eqref{eq:assumption-Q} and \eqref{eq:assumption-2} are satisfied, we have the following theorem showing the stability of 
BCGS-PIP2.
\begin{theorem}\label{thm:bcgs-pip2}
    With the bound~\eqref{eq:orthBoundQhatQ} and the assumptions~\eqref{eq:assumption-Q} and \eqref{eq:assumption-2},
    BCGS-PIP computes $\widehat{Q}_j$ such that the condition number of the accumulated basis vectors
$[Q_{1:j-1},\widehat{Q}_j]$ satisfies,
    \begin{equation}
    \kappa([Q_{1:j-1},\widehat{Q}_j]) = \mathcal{O}(1), \label{eq:hatQcondBound}
    \end{equation}
    and 
    the resulting $Q_j$ from BCGS-PIP2$(Q_{1:j-1},V_j)$ satisfies 
    \begin{eqnarray}
        \label{eq:orthBoundBCGS2-pip}
      && \|I - Q_{1:j}^T Q_{1:j}\| = \mathcal{O}(\epsilon).
    \end{eqnarray}
\end{theorem}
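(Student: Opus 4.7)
The plan is to closely mirror the two-step argument used in Theorem~\ref{thm:chol} for CholQR2, but lifted from the purely intra-block setting to the accumulated matrix $[Q_{1:j-1},\widehat{Q}_j]$. First I would use the orthogonality bound~\eqref{eq:orthBoundQhatQ} applied to the intermediate output of the first BCGS-PIP, together with Weyl's inequality, to show that the singular values of $[Q_{1:j-1},\widehat{Q}_j]$ cluster tightly around one, giving the constant-order condition-number bound~\eqref{eq:hatQcondBound}. Then I would invoke bound~\eqref{eq:orthBoundQhatQ} a second time, now applied to the second BCGS-PIP call, whose input block has been shown to be well-conditioned, to turn the $\kappa^2$ factor into an $\mathcal{O}(1)$ constant and obtain~\eqref{eq:orthBoundBCGS2-pip}.

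Concretely, for the first step, let $G = [Q_{1:j-1},\widehat{Q}_j]^T[Q_{1:j-1},\widehat{Q}_j]$. Since by~\eqref{eq:assumption-2} we have $c_3(\epsilon)\kappa([Q_{1:j-1},V_j])^2 < 1/2$ (the constants $c_2$ and $c_3$ are of the same asymptotic order in $\epsilon$, so if necessary the analysis is stated under a slightly tightened variant of~\eqref{eq:assumption-2}), applying Weyl's inequality to $G-I$ exactly as in the proof of Theorem~\ref{thm:chol} yields $\sigma_{\max}(G) < 3/2$ and $\sigma_{\min}(G) > 1/2$. This gives $\kappa([Q_{1:j-1},\widehat{Q}_j]) < \sqrt{3}$, which is~\eqref{eq:hatQcondBound}.

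For the second step, I would verify that the preconditions of~\eqref{eq:orthBoundQhatQ} are met by the second invocation BCGS-PIP$(Q_{1:j-1},\widehat{Q}_j)$: assumption~\eqref{eq:assumption-Q} for the input $Q_{1:j-1}$ holds by induction on $j$ (the base case $j=1$ is vacuous), and assumption~\eqref{eq:assumption-2}, now with $V_j$ replaced by $\widehat{Q}_j$, follows from the first step combined with $c_2(\epsilon)=\mathcal{O}(\epsilon)$. Substituting the $\mathcal{O}(1)$ condition-number bound back into~\eqref{eq:orthBoundQhatQ} yields $\|I - [Q_{1:j-1},Q_j]^T[Q_{1:j-1},Q_j]\| \le c_3(\epsilon)\,\kappa([Q_{1:j-1},\widehat{Q}_j])^2 = \mathcal{O}(\epsilon)$, which is~\eqref{eq:orthBoundBCGS2-pip}. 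The $R$ and $T$ updates on Lines~5--6 of Fig.~\ref{algo:bcgs-pip2} do not affect the orthogonality of $Q_j$ and can be handled by a routine rounding-error calculation.

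The main subtlety, and the step I would be most careful about, is the inductive propagation of assumption~\eqref{eq:assumption-Q}. A single BCGS-PIP sweep only guarantees an orthogonality error of order $\epsilon\,\kappa^2$, which is too loose to serve directly as~\eqref{eq:assumption-Q} when moving to block $j+1$; the entire purpose of the second sweep is to restore the clean $\mathcal{O}(\epsilon)$ bound so that the induction closes. The proof therefore has to establish~\eqref{eq:orthBoundBCGS2-pip} \emph{before} reusing it as the inductive hypothesis, which is exactly the order in which the two steps above are chained together.
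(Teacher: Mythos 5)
Your proposal is correct and follows essentially the same route as the paper, whose proof is a one-line remark that the argument is ``based on Weyl's inequality similar to that for Theorem~\ref{thm:chol}''; you have simply filled in the details the paper leaves implicit (the Weyl bound on the singular values of the Gram matrix of $[Q_{1:j-1},\widehat{Q}_j]$, the reapplication of the bound~\eqref{eq:orthBoundQhatQ} to the second sweep, and the inductive maintenance of~\eqref{eq:assumption-Q}, which the paper handles only via the remark that the $j=1$ case reduces to CholQR2). Your observation about needing a slightly tightened variant of~\eqref{eq:assumption-2} because the stated hypothesis uses $c_2$ while the bound involves $c_3$ is a fair point that the paper glosses over, but it does not change the substance of the argument.
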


\begin{proof}
The proof is based on Weyl's inequality similar to that for Theorem~\ref{thm:chol}.
\end{proof}
While BCGS2 with CholQR2 needs five synchronizations every $s$ steps, this new variant BCGS-PIP2 needs just two synchronizations and reduces the total computational cost of the intra-block orthogonalization by a factor of $1.5\times$.

We note that when there are no previous blocks (i.e., $j=1$),
BCGS-PIP2 is CholQR2, which satisfies the condition~\eqref{eq:assumption-Q}
due to Theorem~\ref{thm:chol}. Theorems~\ref{thm:chol} and \ref{thm:bcgs-pip2}
imply that when the required assumptions hold,
BCGS followed by CholQR and BCGS-PIP
are both stable pre-processing algorithms for BCGS2 with CholQR2 and BCGS-PIP2, respectively, and obtain $\mathcal{O}(1)$ condition number of the pre-processed block vectors.


Nevertheless, to obtain the best performance of the block orthogonalization, the step size ${s}$ needs to be carefully chosen for each problem on a different hardware. Unfortunately, it is often infeasible to fine-tune the step size in practice, while even if Newton or Chebyshev basis~\cite{Bai:1994} are used, for a large step size $s$, MPK can generate ill-conditioned basis vectors $V_j$ with a large condition number. Hence, instead of fine-tuning the step size, in practice, a conservatively small step size is used
to satisfy the conditions discussed in this section and avoid the numerical instability (e.g., $s=5$). Though BCGS-PIP2 improves the orthogonalization performance by reducing the number of the synchronizations, the small step size may still limit the performance gain that $s$-step methods can bring.

\section{Two-stage Block Orthogonalization}
\label{sec:two-stage}

\begin{figure}[t]
\begin{center}
  \centerline{\fbox{\begin{minipage}[h!]{.45\textwidth}
    \footnotesize
    \input{codes/BCGS2-2stage}
  \end{minipage}}}
\end{center}
  \caption{
           Pseudocode of two-stage algorithms to generate the orthonormal basis vectors of
           ``Big Panel'' consisting of $\widehat{s}+1$ Krylov vectors.} \label{algo:2step}
\end{figure}

In order to improve the performance of block orthogonalization in $s$-step GMRES while using a small step size $s$,
we propose a ``two-stage'' block orthogonalization process, shown in Fig.~\ref{algo:2step}.
Instead of performing BCGS-PIP2 at every $s$ steps to generate the orthonormal basis vectors,
we call BCGS-PIP only once on the new $s+1$ basis vectors generated by MPK. The objective of this first stage is to pre-process the $s$-step basis vectors to maintain a small condition number of the generated basis vectors. In particular, since the $s$-step basis vectors $\widehat{Q}_{1:j-1}$ are being ``roughly'' orthogonalized, when MPK generates the next $s$-step basis vector $V_j$ using the last vector of the block $\widehat{Q}_{j-1}$ as the starting vecctor, the condition number of the accumulated basis vectors $[\widehat{Q}_{\ell:j-1},V_j]$ is hoped to be roughly the same as that of $V_j$ (we will show the numerical results in Section~\ref{sec:numerics}). Then once a sufficient number of basis vectors, $\widehat{s}$, are generated to obtain higher performance,
we orthogonalize the $\widehat{s}$ basis vectors at once by calling BCGS-PIP for the second time, but now on a larger block size $\widehat{s}$ instead of the original step size $s$.

This two-stage approach in Fig.~\ref{algo:2step}
is similar to BCGS-PIP2 in Fig.~\ref{algo:bcgs2-pip}.
To compare these two approaches,
we distinguish between the blocks of two different block sizes $s$ and $\widehat{s}$ for the first and second stages
by referring to them as the ``panels'' and ``big panels'', respectively.
Hence, the two-stage approach pre-processes the panels of $s$ columns at a time, followed by BCGS-PIP on the big panel of $\widehat{s}$ columns.
For the two extreme cases, with $\widehat{s} = s$ and $\widehat{s}=m$
the two-stage approach becomes the standard one-stage BCGS-PIP2 and BCGS-PIP on each panel, followed by CholQR on the entire $m+1$ basis vectors, respectively.

Compared to the original one-stage BCGS-PIP2,
the two-stage approach performs about the same number of floating point operations, but it reduces the number of global synchronizations and performs the orthogonalization using a larger block size.
In particular,
BCGS2 with CholQR2 in Fig.~\ref{algo:bcgs2} or BCGS-PIP2 
in Fig.~\ref{algo:bcgs2-pip}
performs five or two synchronizations at every $s$ steps, respectively.
In contrast, the two-stage approach has only one synchronization every $s$ steps, and one more synchronization every $\widehat{s}$ steps.
Hence, with $\widehat{s} = m$, the two-stage approach reduces the number of synchronization by the factor of $2\times$ (and could also reduce the amount of the required data movement through the local memory hierarchy).
As a result, the two-stage approach often obtains much higher performance as we show in Section~\ref{sec:performance}.

In lieu of rigorous roundoff error analysis, we will provide intuition behind the expected orthogonality errors of the two-stage approach. Since the two-stage approach uses BCGS-PIP on each panel and then on the big panel, we can apply error analysis similar to Section~\ref{sec:bcgs2} but require the assumption~\eqref{eq:assumption-2} on the big panel.
In particular, if the following condition on the big panel $V_{\ell:t}$ is satisfied: 
\begin{eqnarray}\label{eq:assumption-3}
  && c_2(\epsilon)\kappa([Q_{1:\ell-1},V_{\ell:t}])^2 < 1/2\\ 
  \nonumber
  && \mbox{ with } \|I - Q_{1:\ell-1}^T Q_{1:\ell-1}\| = \mathcal{O}(\epsilon),
\end{eqnarray}
then by~\cite[Theorem 3.4]{Carson:2022}, the first BCGS-PIP pre-processes the big panel such that the generated basis vectors satisfy
\small\begin{equation}
 \|I - [Q_{1:\ell-1},\widehat{Q}_{\ell:t}]^T[Q_{1:\ell-1},\widehat{Q}_{\ell:t}]\|
   \le c_3(\epsilon)\kappa([Q_{1:\ell-1},V_{\ell:t}])^2. \label{eq:orthBigPanelFirstStage}
\end{equation}\normalsize
Hence, similar to Theorem~\ref{thm:bcgs-pip2}, under condition~\eqref{eq:assumption-3},
after the first stage, we expect the condition number of the accumulated big panels $[Q_{1:\ell-1},\widehat{Q}_{\ell:t}]$ to be of $\mathcal{O}(1)$, and when the big panel $\widehat{Q}_{\ell:t}$ is orthogonalized by BCGS-PIP at the second stage, 
we expect an $\mathcal{O}(\epsilon)$ orthogonality error of $Q_{1:t}$ by
combining \eqref{eq:orthBigPanelFirstStage} with the $\mathcal{O}(1)$ condition number.

Specifically, by applying Weyl's inequality, we have the following theorem.
\begin{theorem}\label{thm:two-stage}
With the condition~\eqref{eq:assumption-3} and the bound~\eqref{eq:orthBigPanelFirstStage},
the condition number of the big panel after the pre-processing is given by
\begin{equation}
 \kappa([Q_{1:\ell-1},\widehat{Q}_{\ell:t}]) = O(1). \label{eq:condBigPanelFirstStage}
\end{equation}
As a result, when the second stage calls BCGS-PIP on $[Q_{1:\ell-1},\widehat{Q}_{\ell:t}]$, the orthogonality error of the generated basis is bounded as
\begin{equation}
   \|I - Q_{1:t}^T Q_{1:t}\| = \mathcal{O}(\epsilon).
\end{equation}
\end{theorem}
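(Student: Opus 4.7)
The plan is to mirror the two-part structure of the theorem statement, treating the two-stage scheme as a direct generalization of BCGS-PIP2 (Theorem~\ref{thm:bcgs-pip2}) in which the ``inner'' block has width $\widehat{s}$ rather than $s$. First I would establish the $O(1)$ conditioning claim \eqref{eq:condBigPanelFirstStage} on the pre-processed big panel, and then feed this bound into the established BCGS-PIP analysis of \cite[Theorem 3.4]{Carson:2022} to obtain the $O(\epsilon)$ orthogonality error at the second stage.

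For the first part, I would apply Weyl's inequality to the Gram matrix $G = [Q_{1:\ell-1},\widehat{Q}_{\ell:t}]^T[Q_{1:\ell-1},\widehat{Q}_{\ell:t}]$ exactly as in the proof of Theorem~\ref{thm:chol}. Bound \eqref{eq:orthBigPanelFirstStage} gives $\|I - G\| \le c_3(\epsilon)\kappa([Q_{1:\ell-1},V_{\ell:t}])^2$, and \eqref{eq:assumption-3}, together with the fact that $c_2$ and $c_3$ are comparable $O(\epsilon)$ quantities, forces this to be strictly less than $1/2$ in working precision. Weyl's inequality then yields $1/2 < \sigma_i([Q_{1:\ell-1},\widehat{Q}_{\ell:t}])^2 < 3/2$ for every singular value of the concatenated block, so the condition number is bounded by $\sqrt{3}$ and \eqref{eq:condBigPanelFirstStage} follows.

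For the second part, the second-stage BCGS-PIP takes $Q_{1:\ell-1}$ as the previously orthogonalized set and $\widehat{Q}_{\ell:t}$ in the role that $V_j$ played in Theorem~\ref{thm:bcgs-pip2}. The hypothesis \eqref{eq:assumption-Q} on the previously orthogonalized vectors holds by assumption~\eqref{eq:assumption-3}, and the conditioning hypothesis \eqref{eq:assumption-2}, applied to the big panel, becomes $c_2(\epsilon)\kappa([Q_{1:\ell-1},\widehat{Q}_{\ell:t}])^2 < 1/2$, which follows from the $O(1)$ bound just established together with $c_2(\epsilon) = O(\epsilon)$. Invoking \cite[Theorem 3.4]{Carson:2022} — the same result that produced \eqref{eq:orthBoundQhatQ} — then yields $\|I - Q_{1:t}^T Q_{1:t}\| \le c_3(\epsilon)\kappa([Q_{1:\ell-1},\widehat{Q}_{\ell:t}])^2 = O(\epsilon)\cdot O(1) = O(\epsilon)$, which is the desired conclusion.

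The step I expect to be most delicate is the propagation of constants across the two stages, since the precondition of the second-stage BCGS-PIP must be verified against the \emph{output} of the first stage rather than a freshly generated MPK block. Strictly, \eqref{eq:orthBigPanelFirstStage} only controls the norm $\|I - G\|$, and translating this into the singular-value bounds required for $\kappa = O(1)$ needs \eqref{eq:assumption-3} to hold with enough margin that $c_3(\epsilon)\kappa([Q_{1:\ell-1},V_{\ell:t}])^2$ stays bounded away from $1$. Provided the hidden constants in $c_2$ and $c_3$ are comparable — which the citations confirm — this composition goes through and the rest of the argument is a routine application of Weyl's inequality.
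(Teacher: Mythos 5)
Your proposal is correct and follows essentially the same route as the paper: the paper's (very terse) argument is exactly ``apply Weyl's inequality as in Theorem~\ref{thm:chol}'' to get the singular-value bounds $1/2 < \sigma_i^2 < 3/2$ and hence $\kappa([Q_{1:\ell-1},\widehat{Q}_{\ell:t}]) = \mathcal{O}(1)$, then feed that well-conditioned panel back into the BCGS-PIP bound of \cite[Theorem 3.4]{Carson:2022} to conclude $\|I - Q_{1:t}^T Q_{1:t}\| = \mathcal{O}(\epsilon)$. Your added care about verifying hypothesis \eqref{eq:assumption-Q} for the second stage and about the comparability of the constants $c_2$ and $c_3$ fills in details the paper leaves implicit, but does not change the argument.
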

The new condition~\eqref{eq:assumption-3} is on the condition number of the big panel,
while the condition~\eqref{eq:assumption-2} for BCGS-PIP2 only required the condition number of each panel to be less than $\mathcal{O}(\epsilon^{-1/2})$. The key feature of the two-stage approach is that the starting vector for MPK is the last column of $\widehat{Q}_{j-1}$, which has been pre-processed by BCGS-PIP, whose objective is to maintain the small enough condition number of the big panel that satisfies~\eqref{eq:assumption-3}. 

In the next section, we study how the condition number of the pre-processed big panel grows, and compare the orthogonality errors of the two-stage approach with the standard algorithms.
In particular, the numerical results show that 
\[
  \kappa([Q_{1:\ell-1},V_{\ell:j}]) \approx \kappa([Q_{1:\ell-1},\hat{Q}_{\ell:j-1},V_j]),
\]
making assumption \eqref{eq:assumption-3} required for the two-stage BCGS-PIP2 a similar stability requirement to assumption \eqref{eq:assumption-2} required for the one-stage BCGS-PIP2.



\section{Numerical Results}
\label{sec:numerics}
 
\begin{figure}[t]
   \centerline{
     \includegraphics[width=.7\linewidth]{./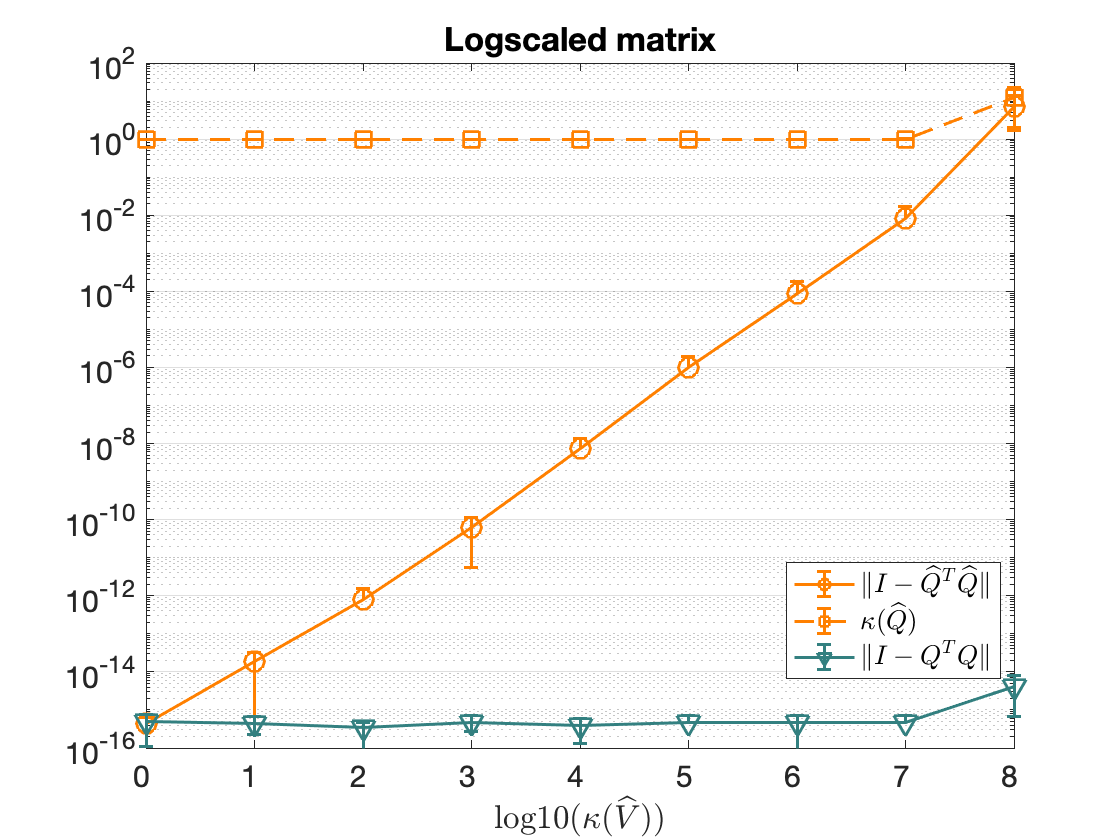}
    }
 \caption{Orthogonality error and condition number with CholQR2 on $10^5$-by-$5$ {\tt Logscaled} matrix.} \label{fig:cholqr-error}
\end{figure}

We compared the orthogonality errors of the proposed block orthogonalization schemes using the default double precision in MATLAB. We first study how the orthogonality errors grow with the condition number of the input vectors. For these studies, instead of studying the numerical properties of the proposed methods within $s$-step GMRES, we treat them as a general block orthogonalization scheme and use synthetic matrices as the input vectors such that we can control the condition number of the matrix easily. We start by showing that both CholQR2 and one-stage BCGS-PIP2 obtain $\mathcal{O}(\epsilon)$ orthogonality error given that conditions~\eqref{eq:assumption-1} and~\eqref{eq:assumption-2} are satisfied, respectively (Figs.~\ref{fig:cholqr-error} and \ref{fig:error-pip2}). We then show that the orthogonality errors of the two-stage approach are also $\mathcal{O}(\epsilon)$ when the condition~\eqref{eq:assumption-3} is satisfied (Fig.~\ref{fig:error-2stage}). Since these synthetic matrices are generated using random numbers, we show the minimum, average, and maximum errors using ten different random seeds. 
Finally, we study how the condition numbers grow for the basis vectors generated by MPK using various positive indefinite matrices of dimension between 200,000 and 300,000 from the SuiteSparse Matrix Collection\footnote{\url{https://sparse.tamu.edu}} (Fig.\ref{fig:cond-mpk}). In order to maintain the stability of the original $s$-step method, we scaled the columns and then rows of the matrices by the maximum nonzero entries in the columns and rows (hence, all the resulting matrices are non-symmetric).
For all of our experiments with MPK and $s$-step GMRES, we used monomial basis, even though using more stable bases, like Newton or Chebyshev bases, could reduce the condition number and improve the applicability of our approaches to a wider class of problems.

\begin{figure}[t]
\centerline{
  \begin{subfigure}[b]{.53\linewidth}
   \centerline{
     \includegraphics[width=\linewidth]{./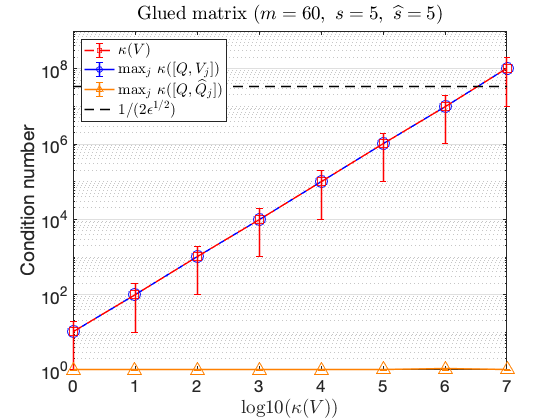}
    }
    \caption{Condition number.}\label{fig:cond-pip2}
  \end{subfigure}
  \begin{subfigure}[b]{.53\linewidth}
   \centerline{
     \includegraphics[width=\linewidth]{./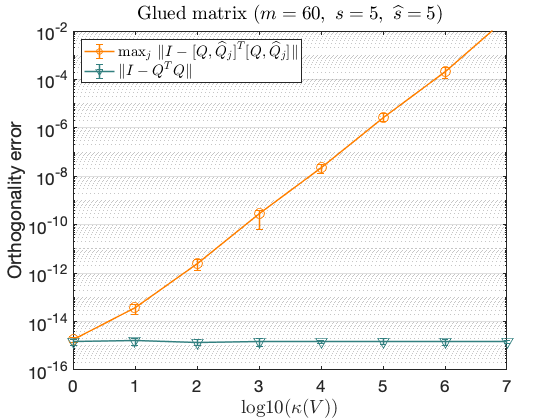}
    }
    \caption{Orthogonality errors.}
  \end{subfigure}
}
 \caption{Condition number and orthogonality error with one-step BCGS-PIP2 on {\tt glued} matrix.}\label{fig:error-pip2}
\end{figure}

Fig.~\ref{fig:cholqr-error} shows the condition numbers and orthogonality errors when CholQR2 is used to orthogonalize the $10^5$-by-$5$ panel $\widehat{V}_j$ of varying condition numbers (i.e., $\widehat{V}_j := X \Sigma Y^T$ with random orthonormal $X$ and $Y$, and diagonal matrix $\Sigma$ with logspace singular values). It shows that as indicated by the bound~\eqref{eq:cholqr_bound}, the orthogonality error of $\widehat{Q}_j$ after the first CholQR grows as $\kappa(\widehat{V}_j)^2\mathcal{O}(\epsilon)$. Hence, when $\kappa(\widehat{V}_j) < \mathcal{O}(\epsilon)^{1/2}$, the condition number of $\widehat{Q}_j$ stays $\mathcal{O}(1)$, and we obtain the $\mathcal{O}(\epsilon)$ orthogonality error of $Q_j$ as indicated by Theorem~\ref{thm:chol}.

\begin{figure}[t]
  \begin{subfigure}[b]{\linewidth}
   \centerline{
     \includegraphics[width=.7\linewidth]{./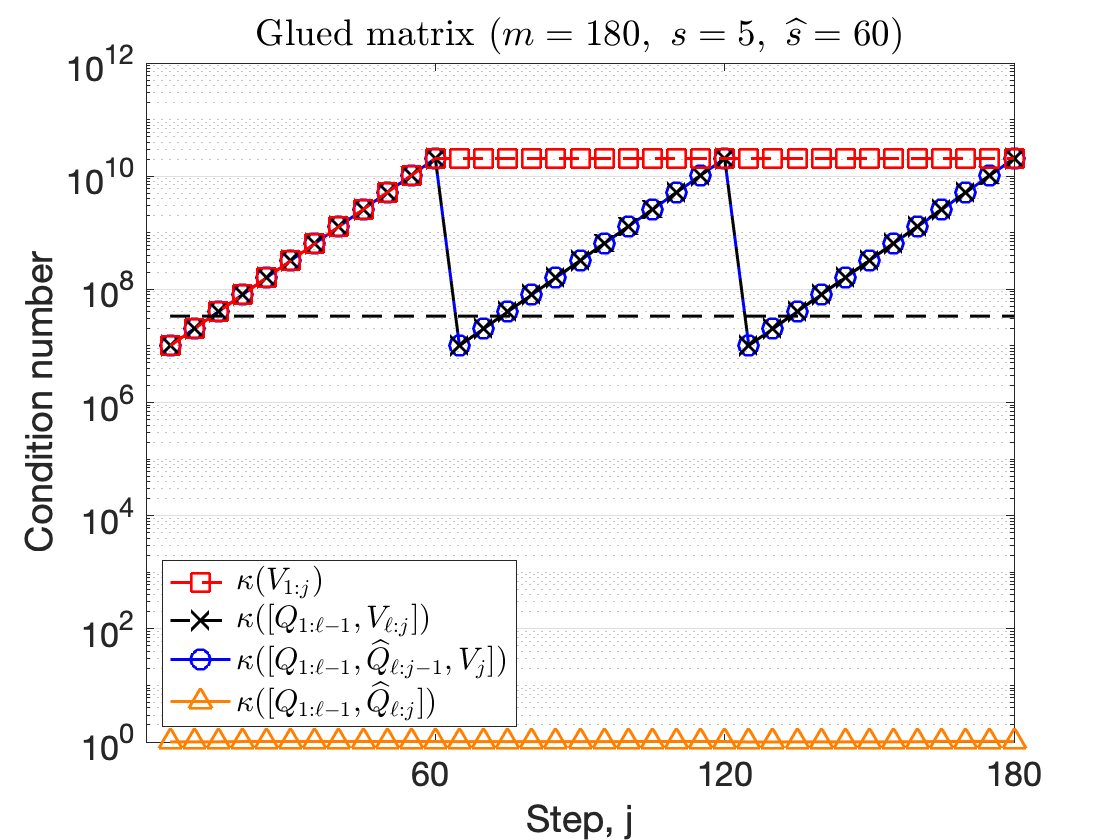}
    }
    \caption{Condition number (marker at every $s$ steps).}\label{fig:cond-2stage}
  \end{subfigure}
  \begin{subfigure}[b]{\linewidth}
   \centerline{
     \includegraphics[width=.7\linewidth]{./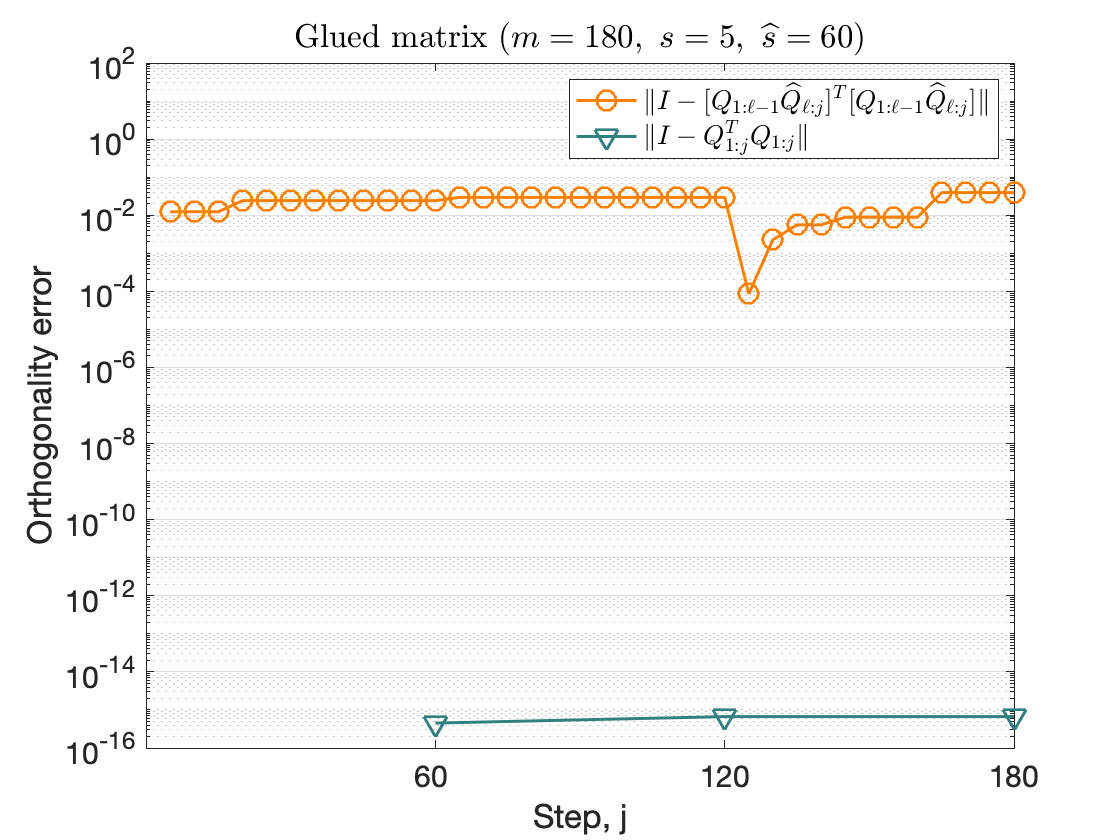}
    }
    \caption{Orthogonality errors (orange circle marker at every $s$ steps, while green triangle marker at every $\widehat{s}$ steps).}
  \end{subfigure}
 \caption{Condition number and orthogonality error using two-stage approach on {\tt glued} matrix with $(n,m,\hat{s},s)=(100000, 180, 60, 5)$.} \label{fig:error-2stage}
\end{figure}

\begin{figure}[t]
  \begin{subfigure}[b]{\linewidth}
   \centerline{
     \includegraphics[width=.85\linewidth]{./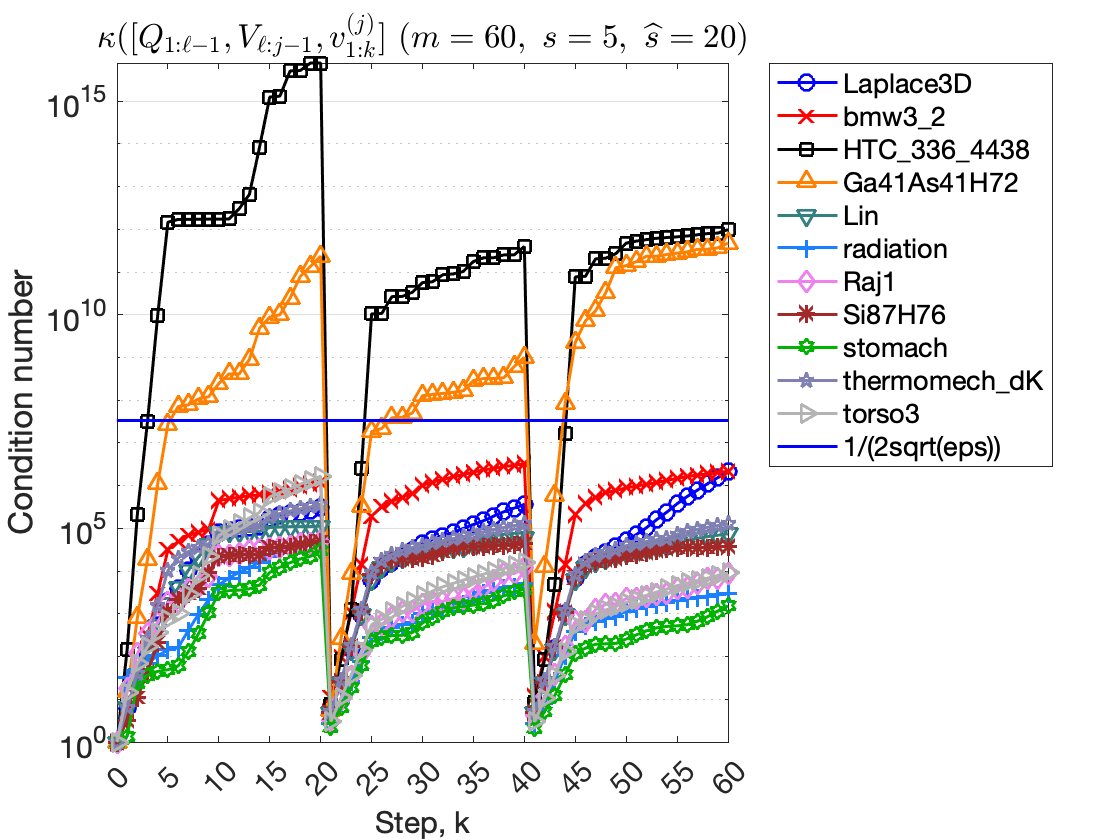}
    }
    \caption{Condition number of $[Q_{1:\ell-1},V_{\ell:j-1},v^{(j)}_{1:k}]$ (marker at every step).}
  \end{subfigure}
  \begin{subfigure}[b]{\linewidth}
   \centerline{
     \includegraphics[width=.85\linewidth]{./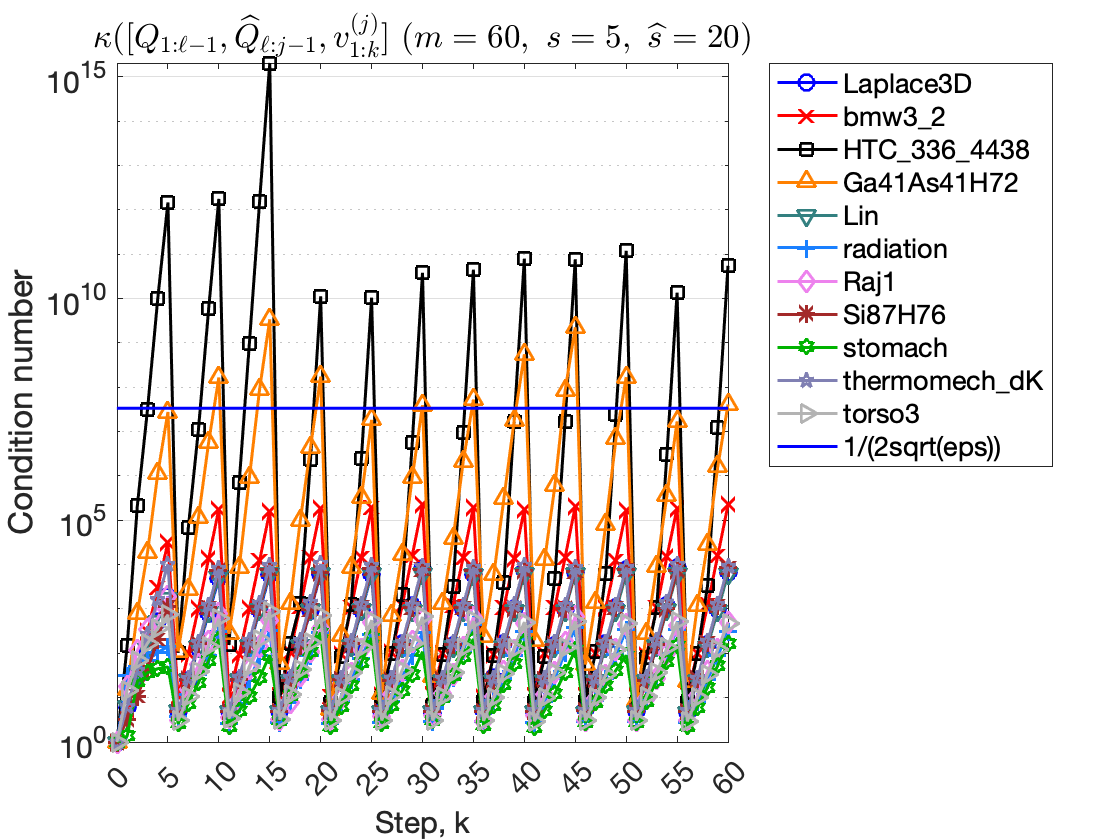}
    }
    \caption{\mbox{Condition number of $[Q_{1:\ell-1},\widehat{Q}_{\ell:j-1},v^{(j)}_{1:k}]$ (marker at every step).}}
  \end{subfigure}
  \begin{subfigure}[b]{\linewidth}
   \centerline{
     \includegraphics[width=.7\linewidth]{./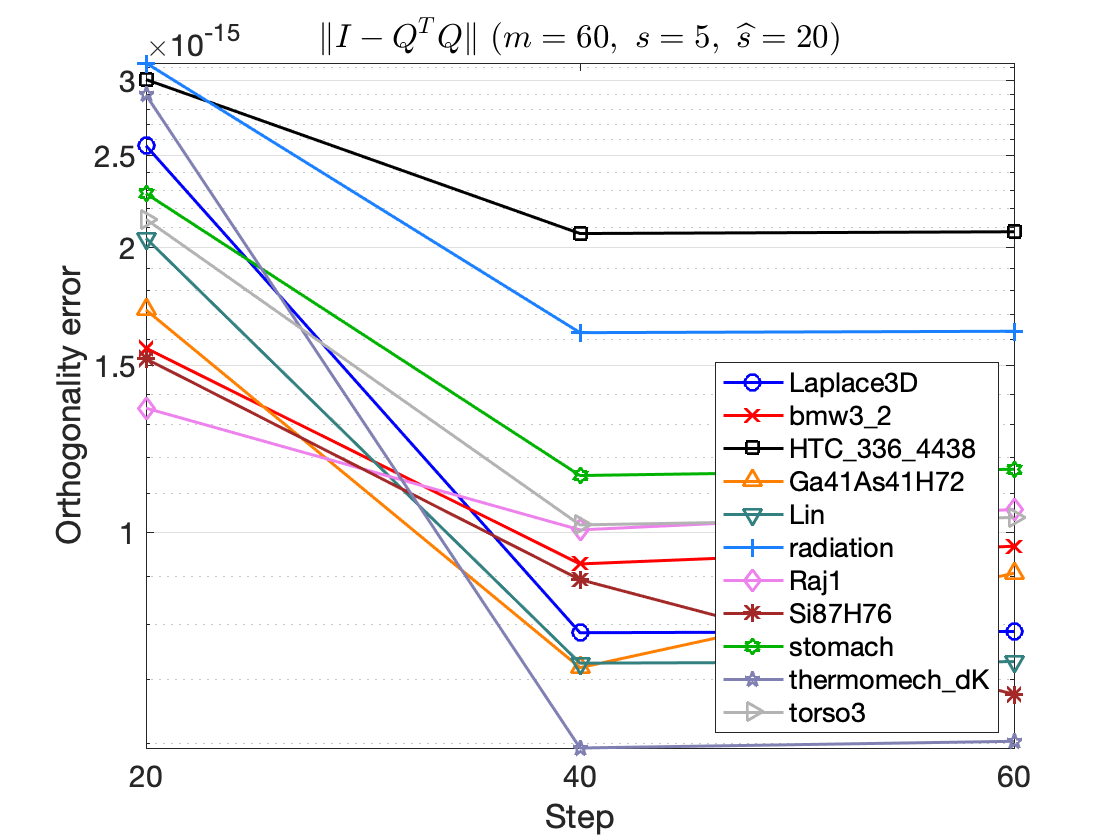}
    }
    \caption{Orthogonality error of $Q$ (marker at every $\widehat{s}$ steps).}
  \end{subfigure}
 \caption{Condition number and orthogonality error with Krylov vectors generated by MPK.} \label{fig:cond-mpk}
 \end{figure}

Fig.~\ref{fig:error-pip2} shows the condition number and the orthogonality error when BCGS-PIP2 is used to orthogonalize the {\tt glued} matrix that has the same specified order of the condition number for each panel and for the overall matrix. As expected, when the condition number of the input matrix is smaller than $\mathcal{O}(\epsilon)^{-1/2}$, the orthogonality error of the basis vectors $\widehat{Q}$ after the first BCGS-PIP is bounded by $\kappa(V)^2 \mathcal{O}(\epsilon)$, and as a result, their condition number remained to be $\mathcal{O}(1)$. Consequently, after the second BCGS-PIP, the orthogonality error of the basis vector $Q$ was $\mathcal{O}(\epsilon)$,
which was the same error obtained by BCGS2 with CholQR2. 

Fig.~\ref{fig:error-2stage} shows the orthogonality errors using the two-stage approach. The test matrix is the {\tt glued} matrix, where each panel $V_j$ has the condition number $\mathcal{O}(10^7)$ but the condition number of $V_{1:j}$ grows as $2^{j-1}\mathcal{O}(10^7)$.
For this synthetic matrix with the pre-generated panels, after the first stage, the accumulated condition number of the panels $[Q_{1:\ell-1},V_{\ell:j}]$ was still about the same as the condition number of the original big panel $V_{\ell:j}$. Even though this synthetic matrix breaks the required condition~\eqref{eq:assumption-3}, the pre-processing step managed to keep the $\mathcal{O}(1)$ condition number of the big panel $[Q_{1:\ell-1},\widehat{Q}_{\ell:t}]$, and the overall orthogonality error of $Q$ was $\mathcal{O}(\epsilon)$.

Finally, Fig.~\ref{fig:cond-mpk} shows the condition number of the basis vectors that are generated by MPK combined with the two-stage block orthogonalization scheme. 
Unlike the pre-generated panels of the synthetic matrix in Fig.~\ref{fig:error-2stage},
the $s$-step basis vectors of the big panel $V_{\ell:t}$ are now generated by MPK, being interleaved with the pre-processing by BCGS-PIP.
As a result, unlike what we have observed in Fig.~\ref{fig:cond-2stage},
the accumulated condition number of $[Q_{1:\ell-1}, V_{\ell:j}]$ in Fig.~\ref{fig:cond-mpk} did not increase significantly as more panels were appended, and except for the two matrices ``HTC\_336\_4438'' and ``Ga41As41H72'',
the condition number of the big panel satisfied the required condition~\eqref{eq:assumption-3}.
The condition number of the basis vectors are managed likely because BCGS-PIP now pre-processes the basis vectors $\widehat{Q}_{j-1}$ before MPK generates the next set of the $s$-step basis vectors $V_j$ such that the starting vector $v^{(j)}_{1}$ is roughly orthogonal to the space spanned by the previous panels $\underline{V}_{\ell:j-1}$.
Without pre-processing the basis vectors, the condition number will continue to increase, preventing us from using a large step size.
Overall, after the second BCGS-PIP on big panel, the orthogonality errors of $Q$ was $\mathcal{O}(\epsilon)$ for all the matrices tested.


\section{Implementation}

We have implemented all the block orthogonalization algorithms for $s$-step GMRES within the Trilinos software framework \cite{Trilinos:2005,trilinos-website}.
Trilinos is a collection of open-source software libraries, called packages, 
for solving linear, non-linear, optimization, and uncertainty quantification problems.
It is intended to be used as building blocks for developing large-scale scientific or engineering applications.
Hence, any improvement in the solver performance could have direct impacts to the application performance.
In addition, Trilinos software stack provides portable performance of the solver on different hardware architectures,
with a single code base.
In particular, our implementation is based on Tpetra
for distributed matrix and vector operations and Kokkos-Kernels
for the on-node portable matrix and vector operations
(which also provides the interfaces for the vendor-optimized
 kernels like NVIDIA cuBLAS, cuSparse, and cuSolver).

On a GPU cluster, our GMRES uses GPUs to generate the orthonormal basis vectors,
where the matrices and vectors are distributed among MPI processes in 1D block row format
(e.g., using a graph partitoner like ParMETIS).
The operations with the small projected matrices, including
solving a small least-squares problem, is redundantly done on CPU by each MPI process.

Our focus is on the block orthogonalization of the vectors,
which are distributed in 1D block row format among the MPI processes.
The orthogonalization process mainly consists of
dot-products, vector updates, and vector scaling 
(e.g., $R_{1:j-1,j} := Q_{1:j-1}^TV_j$ and $V_j := V_j - Q_{1:j-1}R_{1:j-1,j}$ of BCGS in Figure~\ref{algo:bcgs}, 
   and $Q_j := V_j R_{j,j}^{-1}$ of CholQR in Figure~\ref{algo:cholqr}, respectively). 
The dot-products $Q_{1:j-1}^TV_j$ requires the global reduce among all the MPI processes,
and the resulting matrix $R_{1:j-1, j}$ is stored redundantly on all the MPI processes.
Given the upper-triangular matrix, the vectors can be updated and scaled locally without any additional 
communication.  All the local computations are performed by optimized kernels through Kokkos Kernels.

\section{Performance Results}
\label{sec:performance}



 \begin{table}
 \begin{center}\footnotesize
 \begin{tabular}{l|rr|rrrrrrr}
           &       &           &\multicolumn{4}{|c}{$\widehat{s}$}\\
           & GMRES & $s$-step  &  5    \ignore{& 10}    & 20    \ignore{& 30}    & 40    \ignore{&  50} & 60\\
  \hline\hline
  \# iters & 60251 & 60255 & 60255 \ignore{& 60260} & 60260 \ignore{& 60270} & 60280 \ignore{& 60290} & 60300\\
  \hline
  SpMV     & 100.1 & 103.6 & 103.4 \ignore{& 102.9} & 103.7 \ignore{& 102.4} & 104.3 \ignore{& 103.6} & 103.8\\
  Ortho    & 150.4 & 128.6 & 102.8 \ignore{& 95.0}  &  96.9 \ignore{& 76.9}  & 75.2  \ignore{& 70.9}  &  61.1\\
  \hline
  Total    & 249.7 & 232.3 & 206.4 \ignore{& 198.2} & 201.3 \ignore{& 179.4} & 180.2 \ignore{& 175.7} & 165.7
 \end{tabular}
 \end{center}
  \caption{
           Time-to-solution for 2D Laplace, $n = 2000^2$ on $4$ NVIDIA V100 GPUs, 
           with the two-stage approach using different values of second step size $\widehat{s}$,
           while the first step size is fixed as $s=5$.
           The first two columns, ``GMRES'' and ``$s$-step'', show the time using the standard
           and $s$-step GMRES, respectively.}
           \label{tab:time-to-sol-shat}
 \vspace{-.3cm}
 \end{table}


\begin{table*}
\scriptsize
 \begin{center}
 \begin{tabular}{c|rrrr|rrrr|rrrr|rrrr}
  & \multicolumn{4}{c}{GMRES + CGS2} & \multicolumn{4}{|c}{$s$-step + BCGS2-CholQR2} & \multicolumn{4}{|c}{$s$-step + BCGS-PIP2} & \multicolumn{4}{|c}{$s$-step + Two-stage($\widehat{s}=m$)}\\
  \# nodes & \# iters & SpMV & Ortho & Total & \# iters & SpMV & Ortho & Total & \# iters & SpMV & Ortho & Total & \# iters & SpMV & Ortho & Total \\
  \hline\hline
  1        & 60251    & 63.5 & 100.2 & 164.3 
           & 60255    & 64.2 & 71.9  & 134.1  & 60255   & 66.2 & 54.5 & 117.8 & 60300 & 66.6 & 32.0 & 99.2\\
           &          &      &       &    
           &          && $1.4\times$ & $1.2\times$ 
           &          && $1.8\times$ & $1.4\times$
           &          && $3.1\times$ & $1.7\times$\\
  \hline
  2        & 60251    & 38.2 & 72.9  & 108.5 
           & 60255    & 35.2 & 43.9  & 78.9 & 60255 & 35.0 & 30.1 & 65.2 & 60300 & 35.7 & 18.8 & 54.7\\
           &          &      &       &    
           &          && $1.7\times$ & $1.4\times$
           &          && $2.4\times$ & $1.7\times$
           &          && $3.9\times$ & $2.0\times$\\       
  \hline
  4        & 60251    & 27.7 & 59.8  & 85.6  
           & 60255 & 25.3 & 30.8  & 57.1 & 60255 & 25.2 & 19.9 & 45.4 & 60300 & 27.1 & 12.6 & 40.2\\
           &          &      &       &    
           &          && $1.9\times$ & $1.5\times$
           &          && $3.0\times$ & $1.9\times$ 
           &          && $4.7\times$ & $2.1\times$\\ 
  \hline
  8        & 60251    & 20.0 & 51.9  & 70.8  
           & 60255 & 20.0 & 27.2  & 47.0 & 60255 & 20.1 & 16.4 & 36.3 & 60300 & 19.5 & 10.8 & 30.6\\ 
           &          &      &       &
           &          && $1.9\times$ & $1.7\times$ 
           &          && $3.2\times$ & $2.0\times$ 
           &          && $4.8\times$ & $2.3\times$\\ 
  \hline
  16       & 60251    & 17.1 & 48.0  & 64.3  
           & 60255 & 16.7 & 22.8  & 40.2 & 60255 & 17.1 & 14.1 & 30.9 & 60300 & 16.8 & 9.3 & 26.1\\
           &          &      &       &
           &          && $2.1\times$ & $1.6\times$ 
           &          && $3.4\times$ & $2.1\times$ 
           &          && $5.2\times$ & $2.5\times$\\ 
  \hline
  32       & 60251    & 16.0 & 46.9  & 61.9  
           & 60255 & 15.6 & 22.3  & 38.2 & 60255 & 15.6 & 12.6 & 28.1 & 60300 & 16.0 & 8.7 & 24.5\\
           &          &      &       &    
           &          && $2.1\times$ & $1.8\times$ 
           &          && $3.7\times$ & $2.2\times$ 
           &          && $5.4\times$ & $2.5\times$\\ 
 \end{tabular}
 \end{center}
  \caption{
           Parallel Strong Scaling of time-to-solution with 9-points 2D Laplace, $n = 2000^2$. On each node, we launched six MPI processes (one MPI per GPU),
           and hence used 192 GPUs on 32 nodes.
            The table also shows the speedup gained
           using $s$-step and two-stage over standard GMRES for orthogonalization and total solution time.} \label{tab:time-to-sol}
\vspace{-.3cm}
 \end{table*}

We now study the impact of different block orthogonalization schemes on the performance of $s$-step GMRES.
We used the restart length of 60 (i.e., $m=60$), and
considered GMRES to have converged when the relative residual
norm is reduced by six orders of magnitude.
We generated the right-hand-side vector such that the solution is a vector of all ones.

As discussed before, the step size $s$ may need to be carefully chosen.
For example, in Fig.~\ref{fig:error-2stage}, our two-stage algorithm pre-processes the
basis vectors at every fifth step to keep the condition number of the generated basis vectors small, but without the pre-processing step, the condition number will continue to increase exponentially after the fifth step. 
In practice, it is often infeasible to tune the step size as the condition number of the matrix could change significantly during the simulation. Hence,
to avoid numerical instability of MPK, in practice, a conservative step size like $s=5$ is used as the default step size. 
Since we are interested in improving the performance of block orthogonalization 
while using the small step size to maintain the stability of MPK,
we use this default step size of $s=5$ for all the performance results shown in this section, and study the effects of the
two-stage algorithms on the performance of $s$-step GMRES.

Table~\ref{tab:time-to-sol-shat} shows the performance with the two-stage approach using different values of the second step size $\widehat{s}$. The performance tests were conducted on the Advanced System Technology Testbed named Vortex at the Sandia National Laboratories.
Each node of Vortex has dual IBM Power 9 CPUs and four NVIDIA V100 GPUs.
We compiled our code using \textrm{GCC} version 8.3 and \textrm{CUDA} version 11.0 compilers.
As expected, the two-stage approach obtained higher performance using a larger step size, and it obtains the best performance when $\widehat{s}=m$. For these experiments, the pre-processing stage allowed us to maintain the numerical stability of the block orthogonalization process.

We conducted the remaining of our performance tests on the Summit supercomputer at Oak Ridge National Laboratory. Each compute node of Summit
has two 21-core IBM Power~9 CPUs and six NVIDIA Volta V100 GPUs.
The code was compiled using \textrm{g++} compiler version 7.5 and NVIDIA CUDA 11.0,
and linked to 
the IBM Engineering and Scientific Subroutine Library (ESSL) version 6.3 and 
Spectrum MPI version~10.4.

Table~\ref{tab:time-to-sol} shows the time to solution of $s$-step GMRES for solving 2D Laplace problem
on a 5-point stencil (strong parallel-scaling), using different block orthogonalization schemes:
\begin{itemize}
    \item Compared to BCGS2 with CholQR2 that the original $s$-step GMRES uses, BCGS-PIP2 reduces the number of synchronizations from five to two at every $s$ steps, and lowers the computational cost of the intra-block factorization by a factor of $1.5\times$.
          The table shows that BCGS-PIP improved the performance, especially as the latency starts to become more significant on a larger number of nodes. Specifically, BCGS-PIP reduced the orthogonalization time by a factor of $1.3\times$ and $1.7\times$
          over the original $s$-step method on 1 and 32 nodes, respectively, while achieving the respective speedups of $1.1\times$ and $1.3\times$ for the time-to-solution.
          
    \item Two-stage approach further reduces the orthogonalization time,
          and with $\widehat{s}=m$, it obtained the speedups of $1.7\times \sim 1.4\times$ over BCGS-PIP2, and hence, the time-to-solution was also reduced by factors of about $1.2\times$.
\end{itemize}

\begin{figure}[t]
\centerline{
   \begin{subfigure}[b]{.53\linewidth}
   \includegraphics[width=\linewidth]{./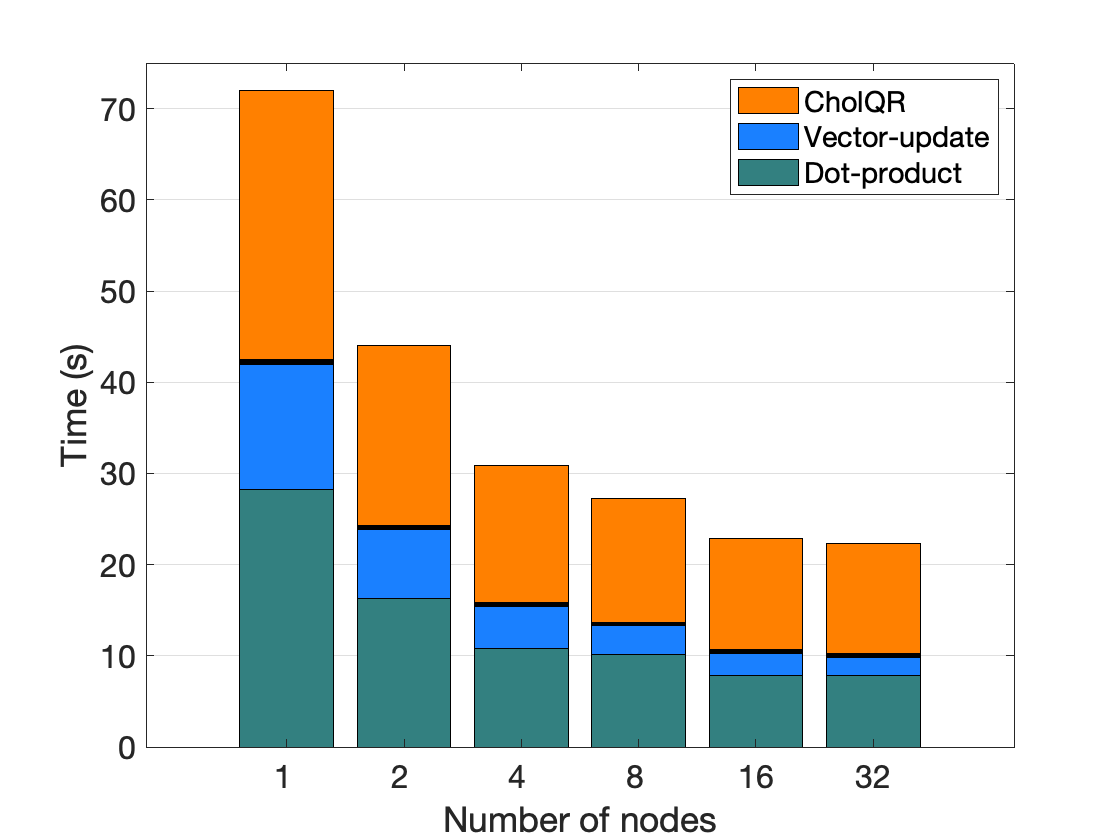}
   \caption{Time in seconds.}
   \end{subfigure}
   \hspace{-.5cm}
   \begin{subfigure}[b]{.53\linewidth}
   \includegraphics[width=\linewidth]{./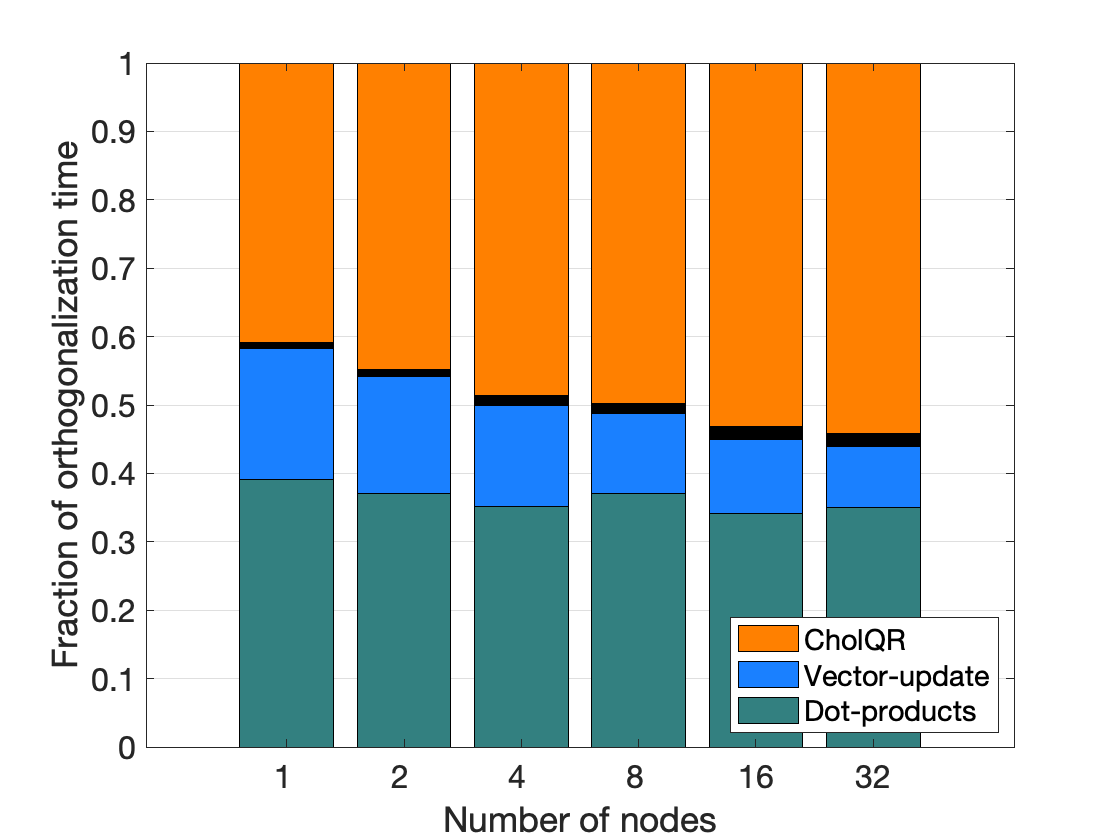}
   \caption{Fraction of time.}
   \end{subfigure}
}
   \caption{
   Orthogonalization time breakdown using BCGS2 with CholQR2 for 2D Laplace, $n = 2000^2$.} \label{fig:time-breakdown-bcgs2}
\vspace{-.3cm}
\end{figure}

\begin{figure}[t]
\centerline{
   \begin{subfigure}[b]{.53\linewidth}
   \includegraphics[width=\linewidth]{./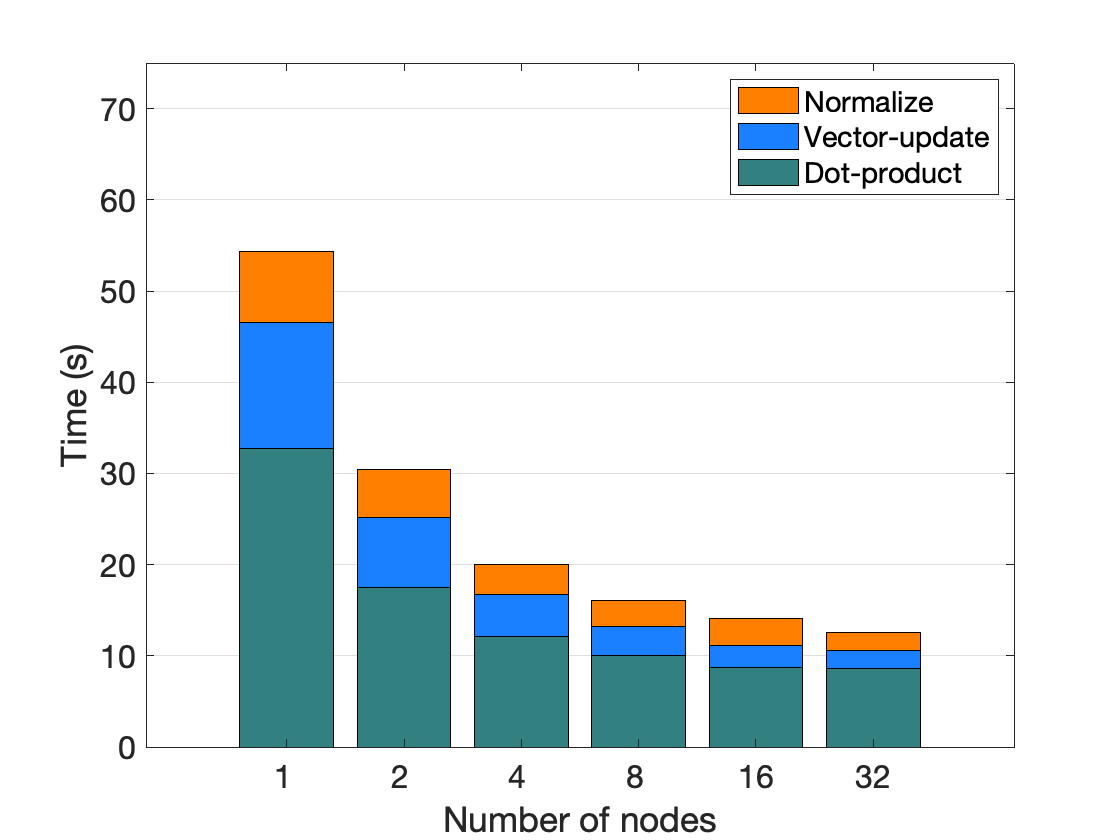}
   \caption{Time in seconds.}
   \end{subfigure}
   \hspace{-.5cm}
   \begin{subfigure}[b]{.53\linewidth}
   \includegraphics[width=\linewidth]{./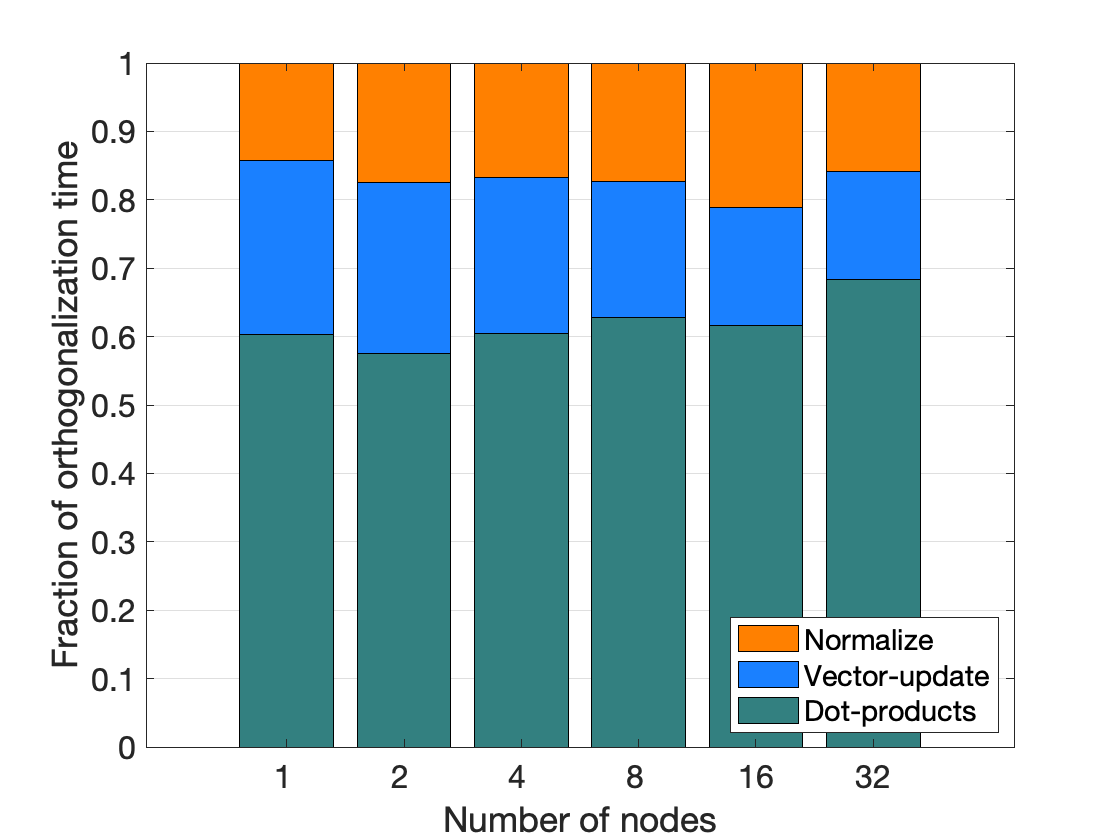}
   \caption{Fraction of time.}
   \end{subfigure}
}
   \caption{
   Orthogonalization time breakdown using BCGS-PIP2 for 2D Laplace, $n, = 2000^2$.} \label{fig:time-breakdown-pip2}
\vspace{-.3cm}
\end{figure}

\begin{figure}[t]
\centerline{
   \begin{subfigure}[b]{.53\linewidth}
   \includegraphics[width=\linewidth]{./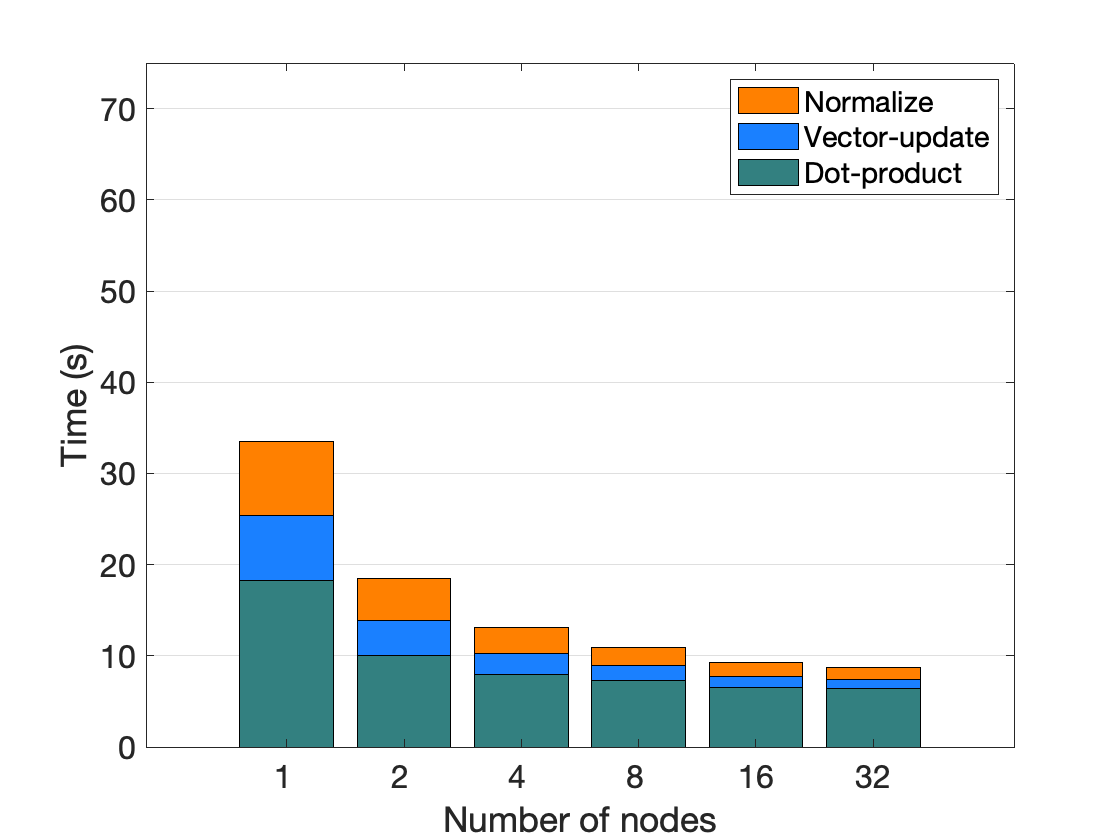}
   \caption{Time in seconds.}
   \end{subfigure}
   \hspace{-.5cm}
   \begin{subfigure}[b]{.53\linewidth}
   \includegraphics[width=\linewidth]{./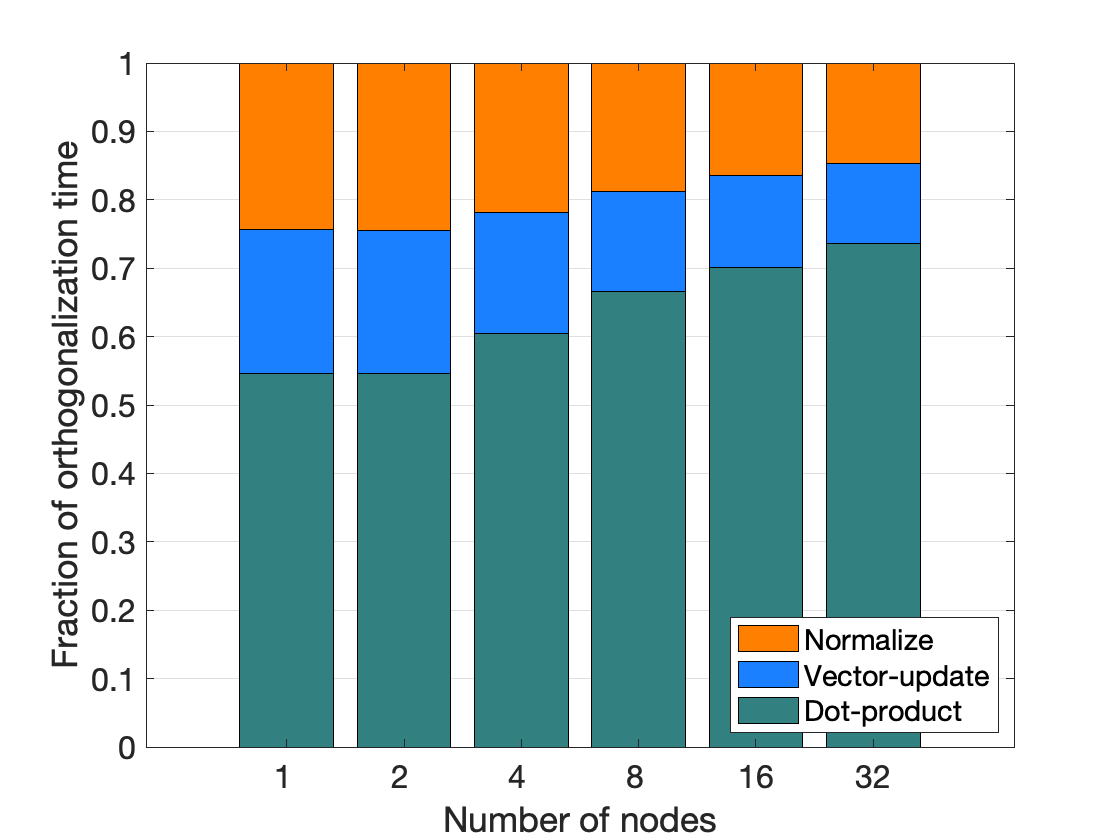}
   \caption{Fraction of time.}
   \end{subfigure}
}
   \caption{
   Orthogonalization time breakdown using two-stage approach for 2D Laplace, $(n,\widehat{s}) = (2000^2, m)$.} \label{fig:time-breakdown-2stage}
\end{figure}

Fig.~\ref{fig:time-breakdown-bcgs2} shows the breakdown of the orthogonalization time,
using BCGS2 with CholQR2. For BCGS2, we show the time needed to compute the dot-products
and vector-updates.
On a larger number of GPUs, the orthogonalization time becomes
dominated more by the dot-products with the global reduces, which are needed 
not only for BCGS2 but also for CholQR.
In comparison, 
Fig.~\ref{fig:time-breakdown-pip2} shows the breakdown of the orthogonalization time
using BCGS-PIP2
where the orthogonalization time was reduced by avoiding the global reduces and reducing the cost
of intra-block orthogonalization of CholQR.
Finally, Fig.~\ref{fig:time-breakdown-2stage} shows the breakdown of the orthogonalization time
using the two-stage approach with $\widehat{s}=m$.
The two-stage approach avoids these global reduces and further reduced the orthogonalization time.

 \begin{table}[t]\scriptsize
 \begin{center}
 \begin{tabular}{l|r|r|l|l}
          &          & \multicolumn{3}{|c}{Time / iter (ms)}\\
          & \# iters & SpMV & Ortho & Total\\
 \hline\hline
 \multicolumn{5}{l}{Laplace3D (Structured 3D model, SPD, $n=100^3$, $nnz/n=6.9$)}\\
 standard  & 454     & 0.36 & 0.87               & 1.15 \\
 $s$-step  & 455     & 0.38 & 0.43 ($2.0\times$) & 0.76 ($1.5\times$)\\
 bcgs-pip2 & 455     & 0.37 & 0.24 ($3.6\times$) & 0.60 ($1.9\times$)\\
 two-stage  & 480     & 0.37 & 0.16 ($5.4\times$) & 0.52 ($2.2\times$)\\
 \hline
 \multicolumn{5}{l}{Elasticity3D (Structured 3D model, SPD, $n=3 \cdot 100^3$, $nnz/n=5.7$)}\\
 standard  & 36      & 0.37 & 0.80               & 1.17\\
 $s$-step  & 40      & 0.39 & 0.45 ($1.8\times$) & 0.88 ($1.3\times$)\\
 bcgs-pip2 & 40      & 0.37 & 0.23 ($3.5\times$) & 0.65 ($1.8\times$)\\
 two-stage  & 60      & 0.33 & 0.14 ($5.7\times$) & 0.51 ($2.3\times$)\\
 \hline\hline
 \multicolumn{5}{l}{atmosmodl (CFD, numerically non-symmetric, $n=1.5$M, $nnz/n=6.9$)}\\
 standard  & 213     & 0.31 & 0.79               & 1.06\\
 $s$-step  & 215     & 0.37 & 0.38 ($2.1\times$) & 0.79 ($1.3\times$)\\
 bcgs-pip2 & 215     & 0.31 & 0.19 ($4.2\times$) & 0.50 ($2.1\times$)\\
 two-stage  & 240     & 0.35 & 0.14 ($5.6\times$) & 0.47 ($2.3\times$)\\
 \hline
\multicolumn{5}{l}{dielFilterV2real (Electromagnet, symmetric indefinite, $n=1.2$M, $nnz/n=41.9$)}\\
 standard  & 491856  & 0.36 & 0.99                & 1.22\\
 $s$-step  & 493145  & 0.33 & 0.36 ($2.8\times$) & 0.66 ($1.8\times$)\\
 bcgs-pip2 & 491865  & 0.30 & 0.19 ($5.2\times$) & 0.48 ($2.5\times$)\\
 two-stage & 491880  & 0.31 & 0.11 ($9.0\times$) & 0.42 ($2.9\times$)\\
 \hline
 \multicolumn{5}{l}{ecology2 (Circuit, SPD, $n=1.0$M, $nnz/n=5.0$)}\\
 standard  & 3471536 & 0.25 & 0.80                & 1.04\\
 $s$-step  & 3471540 & 0.24 & 0.34 ($2.4\times$) & 0.58 ($1.8\times$)\\
 bcgs-pip2 & 3471535 & 0.24 & 0.18 ($4.4\times$) & 0.42 ($2.5\times$)\\
 two-stage & 3471540 & 0.25 & 0.10 ($8.0\times$) & 0.36 ($2.9\times$)\\
 \hline
 \hline
 \multicolumn{5}{l}{ML\_Geer, (Structural, numerically non-symmetric, $n=1.5$M, $nnz/n=73.7$)}\\
 standard  & 1596564 & 0.28 & 0.74                & 1.00\\
 $s$-step  & 1664400 & 0.29 & 0.37 ($2.0\times$)  & 0.65 ($1.5\times$)\\
 bcgs-pip2 & 1613060 & 0.28 & 0.20 ($3.7\times$)  & 0.47 ($2.1\times$) \\
 two-stage & 1517460 & 0.28 & 0.11 ($6.2\times$)  & 0.39 ($2.6\times$)\\ 
 \hline
 \multicolumn{5}{l}{thermal2 (Unstructured thermmal FEM, SPD, $n=1.2$M, $nnz/n=7.0$)}\\
 standard  & 139188  & 0.26 & 0.81               & 1.06\\
 $s$-step  & 139190  & 0.26 & 0.36 ($2.2\times$) & 0.61 ($1.7\times$)\\
 bcgs-pip2 & 139190  & 0.25 & 0.20 ($4.1\times$) & 0.44 ($2.4\times$)\\
 two-stage & 139200  & 0.27 & 0.13 ($6.2\times$) & 0.39 ($2.7\times$)\\
 \end{tabular}
 \end{center}
 \caption{Time per iteration for 3D model problems and matrices from SuiteSparse Matrix Collection on 16 Summit nodes;
          ParMETIS to distribute the matrix among 96 GPUs.}\label{tab:collection}
\vspace{-.3cm}
 \end{table}

 To summarize the performance studies,
 Table~\ref{tab:collection} compares the performance of $s$-step GMRES for 3D model problems
 and matrices from the SuiteSparse Matrix Collection.
 Since these matrices have similar dimensions,
 the required orthogonalization time and the speedups gained using $s$-step with respective orthogonalization algorithms were similar.
 Though the ratio of the orthogonalization time over the iteration time depends on the required time for SpMV with the matrices,
 BCGS-PIP reduced the orthogonalization and iteration time by factors of $1.8\sim2.0\times$ and $1.3\sim1.8\times$ over the original $s$-step GMRES,
 which had already obtained the respective speedups of $1.8\sim2.8\times$ and $1.3\sim1.8\times$ over the standard GMRES.
The two-stage approach further improved the performance
obtaining the respective speedups of
$1.4 \sim 1.8\times$ and $1.1 \sim 1.3\times$
for the orthogonalization and time-to-solution.

Finally, Figure~\ref{fig:gs} shows a similar performance trend when a local Gauss-Seidel preconditioner (block Jacobi with Gauss-Seidel in each block~\cite{Baker:2011}) was used. We used the multicolor Gauss-Seidel~\cite{Deveci:2016}
from Kokkos Kernels to get good performance on the GPU.

 \begin{figure}
   \centerline{
   \includegraphics[width=.8\linewidth]{./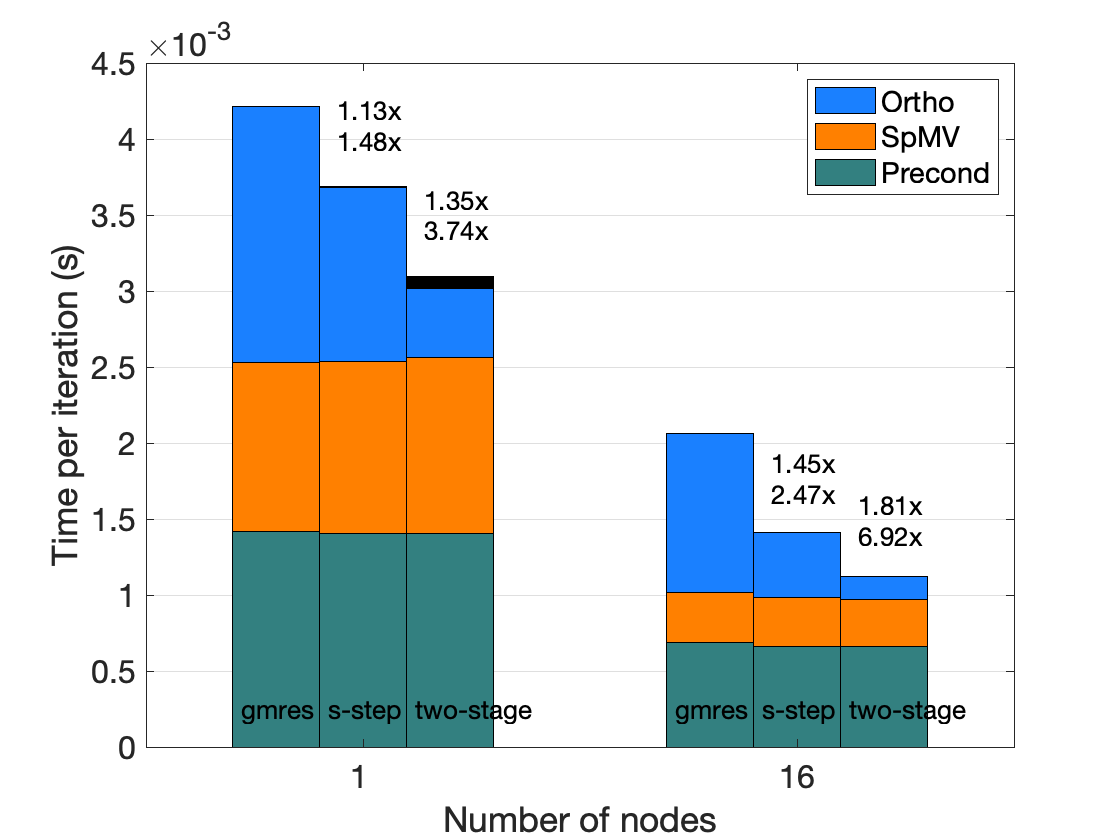}
   }

   \caption{
   Time per iteration breakdown of $s$-step GMRES with Gauss-Seidel preconditioner
   for 2D Laplace, $(n,\widehat{s}) = (2000^2, m)$, along with the speedups
   over standard GMRES for the orthogonalization (bottom) and iteration (top) time.} \label{fig:gs}
\vspace{-.3cm}
 \end{figure}

\section{Conclusion}

We surveyed the current state-of-the-art block orthogonalization algorithms for $s$-step GMRES,
and this motivated a new method called BCGS-PIP2. We showed BCGS-PIP2 reduces the cost of the orthogonalization and improves the performance of $s$-step GMRES. 
Nevertheless, since $s$-step basis vectors can be extremely ill-conditioned for a large step size $s$,
to maintain the stability in practice, a small step size needs to be used, which limits the performance
gain that $s$-step GMRES can bring.
In order to improve the performance of block orthogonalization using a small step size,
we introduced a two-stage algorithm, which pre-process the $s$ basis vectors at a time to maintain
the well-conditioning of the basis vectors but delay the orthogonalization until enough basis vectors
are generated to obtain higher performance.
We presented numerical and performance results to demonstrate its potential.

We are exploring the potential to combine this two-stage approach with other techniques, such as, random sketching.
This may allow us to remove many of the conditions required to guarantee the stability of the algorithm, without significant performance overhead.


\section*{Acknowledgment}
This work was supported by the Exascale Computing Project (17-SC-20-SC), a collaborative effort of the U.S. Department of Energy Office of Science and the National Nuclear Security Administration.
Sandia National Laboratories is a multimission laboratory managed and operated by National Technology and Engineering Solutions of Sandia, LLC, a wholly owned subsidiary of Honeywell International, Inc., for the U.S. Department of Energy's National Nuclear Security Administration under contract DE-NA-0003525. This paper describes objective technical results and analysis. Any subjective views or opinions that might be expressed in the paper do not necessarily represent the views of the U.S. Department of Energy or the United States Government.

\bibliographystyle{plain}
\bibliography{ref}

\begin{thebibliography}{10}

\bibitem{Bai:1994}
Z.~Bai, D.~Hu, and L.~Reichel.
\newblock {A {Newton} basis {GMRES} implementation}.
\newblock {\em IMA J. Numer. Anal.}, 14(4):563--581, 1994.

\bibitem{Baker:2011}
A.~H. Baker, R.~D. Falgout, T.~V. Kolev, and U.~M. Yang.
\newblock Multigrid smoothers for ultraparallel computing.
\newblock {\em SIAM J. Sci. Comput.}, 33:2864--2887, 2011.

\bibitem{Balabanov:2022}
O.~Balabanov.
\newblock Randomized {C}holesky {QR} factorizations, 2022.
\newblock arXiv:2210.09953.

\bibitem{Barlow:2021}
J.~L. Barlow.
\newblock Some added flexibility for block classical {Gram-Schmidt} with reorthogonalization, 2021.
\newblock Talk presented at the SIAM Conference on Applied Linear Algebra. Virtual.

\bibitem{Barlow:2013}
J.~L. Barlow and A.~Smoktunowicz.
\newblock Reorthogonalized block classical {G}ram–{S}chmidt.
\newblock {\em Numer. Math.}, 123:395--–423, 2013.

\bibitem{Carson:2015}
E.~Carson.
\newblock {\em Communication-avoiding {K}rylov subspace methods in theory and practice}.
\newblock PhD thesis, EECS Dept., U.C. Berkeley, 2015.

\bibitem{Carson:2022}
E.~Carson, K.~Lund, M.~Rozlo\v{z}ník, and S.~Thomas.
\newblock Block {G}ram-{S}chmidt algorithms and their stability properties.
\newblock {\em Linear Algebra Appl.}, 638:150--195, 2022.

\bibitem{Sturler:1995}
E.~{de Sturler} and H.~{van der Vorst}.
\newblock Reducing the effect of global communication in {GMRES}(m) and {CG} on parallel distributed memory computers.
\newblock {\em Applied Numer. Math.}, 18:441--459, 1995.

\bibitem{Demmel:2012}
J.~Demmel, L.~Grigori, M.~Hoemmen, and J.~Langou.
\newblock Communication-optimal parallel and sequential {QR} and {LU} factorizations.
\newblock {\em SIAM J. Sci. Comput.}, 34:A206--A239, 2012.

\bibitem{Deveci:2016}
M.~{Deveci}, E.~G. {Boman}, K.~D. {Devine}, and S.~{Rajamanickam}.
\newblock Parallel graph coloring for manycore architectures.
\newblock In {\em IEEE Int. Par. Dist. Proc. Symp. (IPDPS)}, pages 892--901, 2016.

\bibitem{Fukaya:2020}
T.~Fukaya, R.~Kannan, Y.~Nakatsukasa, Y.~Yamamoto, and Y.~Yanagisawa.
\newblock Shifted cholesky qr for computing the qr factorization of ill-conditioned matrices.
\newblock {\em SIAM J. Sci. Comput.}, 42:A477--A503, 2020.

\bibitem{Greenbaum:1997}
A.~Greenbaum, M.~Rozlo\v{z}n\.{i}k, and Z.~Strako\v{s}.
\newblock Numerical behaviour of the modified {G}ram-{S}chmidt {GMRES} implementation.
\newblock {\em {BIT} Numer. Math.}, 37:706--719, 1997.

\bibitem{Grigori:2015}
L.~Grigori and S.~Moufawad.
\newblock {Communication Avoiding ILU0 Preconditioner}.
\newblock {\em SIAM J. Sci. Comput.}, 37:C217--C246, 2015.

\bibitem{Trilinos:2005}
M.~A. Heroux, R.~A. Bartlett, V.~E. Howle, R.~J. Hoekstra, J.~J. Hu, T.~G. Kolda, R.~B. Lehoucq, K.~R. Long, R.~P. Pawlowski, E.~T. Phipps, A.~G. Salinger, H.~K. Thornquist, R.~S. Tuminaro, J.~M. Willenbring, A.~Williams, and K.~S. Stanley.
\newblock An overview of the {Trilinos} project.
\newblock {\em ACM Trans. Math. Softw.}, 31(3):397–423, sep 2005.

\bibitem{Hida:2001}
Y.~Hida, X.~Li, and D.~Bailey.
\newblock Algorithms for quad-double precision floating point arithmetic.
\newblock In {\em Proc. 15th IEEE Symp. Comput. Arith (ARITH-15)}, pages 155--162. IEEE, 2001.

\bibitem{Hoemmen:2010}
M.~Hoemmen.
\newblock {\em Communication-avoiding {Krylov} subspace methods}.
\newblock PhD thesis, EECS Dept., U.C. Berkeley, 2010.

\bibitem{Joubert:1992}
W.~Joubert and G.~F. Carey.
\newblock Parallelizable restarted iterative methods for nonsymmetric linear systems. {II}: parallel implementation.
\newblock {\em Int. J. Comput. Math.}, 44:269--290, 1992.

\bibitem{Mohiyuddin:2009}
M.~Mohiyuddin, M.~Hoemmen, J.~Demmel, and K.~Yelick.
\newblock Minimizing communication in sparse matrix solvers.
\newblock In {\em {Proc. Int. Conf. High Perf. Comput., Netw., Stor. and Anal. (SC)}}, pages 36:1--36:12, 2009.

\bibitem{Saad:1986}
Y.~Saad and M.~H. Schultz.
\newblock G{MRES}: a generalized minimal residual algorithm for solving nonsymmetric linear systems.
\newblock {\em SIAM J. Sci. Statist. Comput.}, 7:856--869, 1986.

\bibitem{Stath:2002}
A.~Stathopoulos and K.~Wu.
\newblock A block orthogonalization procedure with constant synchronization requirements.
\newblock {\em SIAM J.~Sci.~Comput.}, 23:2165--2182, 2002.

\bibitem{trilinos-website}
The {T}rilinos~{P}roject {T}eam.
\newblock {\em The {T}rilinos {P}roject {W}ebsite: \texttt{https://trilinos.github.io}}.

\bibitem{Weyl:1912}
H.~Weyl.
\newblock Das asymptotische {V}erteilungsgesetz der {E}igenwerte linearer partieller {D}ifferentialgleichungen (mit einer {A}nwendung auf die {T}heorie der {H}ohlraumstrahlung).
\newblock {\em Math. Annal.}, 71:441--479, 1912.

\bibitem{Fukaya:2015}
Y.~Yamamoto, Y.~Nakatsukasa, Y.~Yanagisawa, and T.~Fukaya.
\newblock Roundoff error analysis of the {C}holesky {QR}2 algorithm.
\newblock {\em Electronic Trans. Numer. Anal.}, 44:306--326, 2015.

\bibitem{Yamazaki:2014}
I.~Yamazaki, S.~Rajamanickam, E.~Boman, M.~Hoemmen, M.~Heroux, and S.~Tomov.
\newblock {Domain Decomposition Preconditioners for Communication-avoiding Krylov Methods on a Hybrid CPU-GPU Cluster}.
\newblock In {\em {Proc. Int. Conf. High Perf. Comput., Netw., Stor. and Anal. (SC)}}, pages 933--944, 2014.

\bibitem{Yamazaki:2020}
I.~Yamazaki, S.~J. Thomas, M.~Hoemmen, E.~G. Boman, K.~Swirydowicz, and J.~J. Elliott.
\newblock Low-synchronization orthogonalization schemes for \emph{s}-step and pipelined {K}rylov solvers in {T}rilinos.
\newblock In {\em Proc. of {SIAM} Conf. Parallel Processing for Sci. Comput.}, pages 118--128, 2020.

\bibitem{Yamazaki:2014:mpChol}
I.~Yamazaki, S.~Tomov, T.~Dong, and J.~Dongarra.
\newblock Mixed-precision orthogonalization scheme and adaptive step size for improving the stability and performance of {CA-GMRES} on gpus.
\newblock In {\em High Performance Computing for Computational Science - {VECPAR}}, volume 8969, pages 17--30, 2014.

\bibitem{Yamazaki:2015}
I.~Yamazaki, S.~Tomov, J.~Kurzak, J.~Dongarra, and J.~Barlow.
\newblock Mixed-precision block {Gram-Schmidt} orthogonalization.
\newblock In {\em Proc. 6th Workshop on Latest Advances in Scalable Algorithms for Large-Scale Systems}, pages 1--8, 2015.
\newblock Article No.~2.

\end{thebibliography}

\end{document}